\newcommand{\calH}{\mathcal{H}}
\newcommand{\Hecke}{\calH}
\newcommand{\Q}{\mathbb{Q}}
\newcommand{\Z}{\mathbb{Z}}
\newcommand{\R}{\mathbb{R}}
\newcommand{\C}{\mathbb{C}}
\newcommand{\Lat}{\Lambda}
\newcommand{\modform}[4]{\textup{\href{https://www.lmfdb.org/ModularForm/GL2/Q/holomorphic/#1/#2/#3/#4/}{\color{blue}{\textsf{#1.#2.#3.#4}}}}}
\newcommand{\hmf}[4]{\href{https://www.lmfdb.org/ModularForm/GL2/TotallyReal/#1/holomorphic/#1-#2.#3-#4}{\color{blue}{\textsf{#1-#2.#3-#4}}}}
\newcommand{\frakp}{\mathfrak{p}}
\newcommand{\frakq}{\mathfrak{q}}
\DeclareMathOperator{\Asai}{Asai}
\DeclareMathOperator{\Cls}{Cls}
\DeclareMathOperator{\Cl}{Cl}
\DeclareMathOperator{\DN}{DN}
\DeclareMathOperator{\Gal}{Gal}
\DeclareMathOperator{\Gen}{Gen}
\DeclareMathOperator{\gen}{\Gen}
\DeclareMathOperator{\genus}{\Gen}
\DeclareMathOperator{\GL}{GL}
\DeclareMathOperator{\M}{M}
\DeclareMathOperator{\Mat}{Mat}
\DeclareMathOperator{\Nm}{Nm}
\DeclareMathOperator{\OO}{O}
\DeclareMathOperator{\nrd}{nrd}
\DeclareMathOperator{\SL}{SL}
\DeclareMathOperator{\SO}{SO}
\DeclareMathOperator{\std}{std}
\DeclareMathOperator{\Sym}{Sym}
\DeclareMathOperator{\tr}{tr}
\newcommand{\thet}[1]{\theta^{(#1)}}
\numberwithin{equation}{section}
\newtheorem{cor}[equation]{Corollary}
\newtheorem{thm}[equation]{Theorem}
\newtheorem{conj}[equation]{Conjecture}
\theoremstyle{definition}
\newtheorem{defn}[equation]{Definition}
\newtheorem{ex}[equation]{Example}
\newtheorem{algorithm}[equation]{Algorithm}
\theoremstyle{remark}
\newtheorem{rem}[equation]{Remark}
\newenvironment{enumalph}
{\begin{enumerate}}
{\end{enumerate}}
\newcommand{\defi}[1]{\textsf{#1}} 
\renewcommand{\cong}{\simeq}
\title[Definite orthogonal modular forms]{Definite orthogonal modular forms: \\ computations, excursions, and discoveries}
\author{Eran Assaf}
\address{Department of Mathematics, Dartmouth College, 6188 Kemeny Hall, Hanover, NH 03755, USA}
\email{eran.assaf@dartmouth.edu}
\author{Dan Fretwell} 
\address{School of Mathematics, Fry Building, Woodland Road, Bristol, BS8 4ES, UK}
\email{daniel.fretwell@bristol.ac.uk}
\author{Colin Ingalls} 
\address{School of Mathematics and Statistics, 4302 Herzberg Laboratories,
  1125 Colonel By Drive, Carleton University, Ottawa, ON K1S 5B6, Canada}
\email{coliningalls@cunet.carleton.ca}
\author{Adam Logan} 
\address{The Tutte Institute for Mathematics and Computation,
  P.O. Box 9703, Terminal, Ottawa, ON K1G 3Z4, Canada; and
  School of Mathematics and Statistics, 4302 Herzberg Laboratories,
  1125 Colonel By Drive, Carleton University, Ottawa, ON K1S 5B6, Canada}
\email{adam.m.logan@gmail.com}
\author{Spencer Secord} 
\address{Department of Pure Mathematics, 5319 Mathematics and Computer Building, 200 University Ave W, University of Waterloo, Waterloo, ON N2L 3G1, Canada}
\email{spencer.e.secord@gmail.com}
\author{John Voight}
\address{Department of Mathematics, Dartmouth College, 6188 Kemeny Hall, Hanover, NH 03755, USA}
\email{jvoight@gmail.com}
\begin{document}

\begin{abstract}
We consider spaces of modular forms attached to definite orthogonal groups of low even rank and nontrivial level, equipped with Hecke operators defined by Kneser neighbours.  After reviewing algorithms to compute with these spaces, we investigate endoscopy using theta series and a theorem of Rallis.  Along the way, we exhibit many examples and pose several conjectures.  As a first application, we express counts of Kneser neighbours in terms of coefficients of classical or Siegel modular forms, complementing work of Chenevier--Lannes.  As a second application, we prove new instances of Eisenstein congruences of Ramanujan and Kurokawa--Mizumoto type.  
\end{abstract}

\maketitle

\setcounter{tocdepth}{1}
\tableofcontents

\section{Introduction}

\subsection*{Motivation and context}

The rich interplay between quadratic forms, theta series, and modular forms---together with their associated Galois representations, automorphic representations, and $L$-functions---remains a topic of broad interest in number theory.  Computational methods have developed part and parcel with theoretical advances along these lines.  This union has provided a wide range of applications, including the explicit investigation of predictions in the Langlands program.

Let $Q(x_1,\dots,x_n) \in \Z[x_1,\dots,x_n]$ be a positive definite, integral quadratic form of rank $n$ and (half-)discriminant $D$.  One may think equivalently of a lattice $\Lambda \simeq \Z^n$ embedded in $\R^n$, where the standard Euclidean norm restricts to $Q$ on $\Lambda$.  
Related to $Q$ are the forms in its \defi{genus} $\Gen Q$, the set of quadratic forms locally equivalent to $Q$ at all places.  The set of global equivalence classes in the genus define the \defi{class set} $\Cls Q$.  The class set measures the failure of the local--global principle for equivalence of forms, and by the geometry of numbers we have $\#\Cls Q < \infty$.  Complex-valued functions on the finite set $\Cls Q$ (or more generally, valued in an algebraic representation of the orthogonal group of $Q$) define a space of modular forms $M=M(\Lambda)$.  The space $M$ can be equipped with the action of Hecke operators, defined by counting classes of Kneser $p$-neighbours.  Attached to eigenforms for the Hecke action are automorphic $L$-functions.  (For more detail, see Section~\ref{sec:setup}.)  

Just as in the classification of semisimple Lie groups, significant differences among spaces of orthogonal modular forms emerge depending on the parity and size of the rank $n$.  The case of small odd rank has seen significant investigation.  For rank $n=3$ and arbitrary $D$, there is a Hecke-equivariant, functorial association to classical modular forms, first developed by Birch \cite{birch} and recently refined and generalized by Hein \cite{Hein} and Hein--Tornar{\'\i}a--Voight \cite{HeinTornariaVoight}.  For $n=5$ and (at least squarefree) (half-)discriminant $D$, Rama--Tornar{\'\i}a \cite{RamaTornaria} and Dummigan--Pacetti--Rama--Tornar{\'\i}a \cite{dprt} exhibited striking explicit connections to Siegel paramodular forms, building on previous work of Ibukiyama \cite{Ibu19}.  In both cases, the association can be understood as being furnished by Clifford algebras.

On the other hand, the situation of large rank and low level has seen recent significant strides.  Chenevier--Lannes \cite{cl} beautifully studied functoriality for orthogonal modular forms attached to even unimodular lattices of ranks $n=16,24$.  M\'egarban\'e \cite{Megarbane} also studied lattices of rank $n=23,25$ with half-discriminant $D=1$.  In both cases, the corresponding automorphic representations are unramified at all finite places.  For example, in rank $n=16$, the class set is represented by $E_{8} \oplus E_{8}$ and $E_{16}$, and the partitioning of Kneser $p$-neighbours between these classes can be expressed explicitly in terms of $\tau(p)$, the Fourier coefficients of Ramanujan's $\Delta$-function.  And for $n=24$, Chenevier--Lannes prove a congruence modulo $41$ between a Siegel modular form and a classical modular form, originally conjectured by Harder \cite{Harder}.  For a r\'esum\'e, see Examples \ref{exm:sps128} and \ref{ex:harder}.

Our initial goal in this project (which began as an undergraduate summer project of Secord) was to give formulas similar to that of Chenevier--Lannes \cite{cl} for the number of $p$-neighbours.  
However, it turned out to be necessary to change our viewpoint and consider the eigenvalues and eigenvectors of the Kneser matrices and to relate them to automorphic forms and Galois representations, as well as to investigate theta series, in order to facilitate the discovery of such formulas and to enable us to prove them. 

\subsection*{Results and contents}

With this motivation in mind, here we seek to complement the work mentioned above by considering low to moderate even rank and nontrivial discriminant $D$.  We are guided by computational discovery, and we highlight features and phenomena in this setting that we hope will be insightful in the context of the Langlands program.

After a quick setup in Section \ref{sec:setup}, we present in Section \ref{sec:algs} an implementation of algorithms for computing the Hecke module structure of definite orthogonal modular forms (at good primes), implemented in \textsf{Magma} and available online \cite{amf}.  This implementation works with an arbitrary lattice and allows arbitrary weight, and we report on its practical performance.  

We then proceed in increasing even rank $n$.  The case $n=2$ concerns genera of positive definite binary quadratic forms; the associated $L$-functions are Hecke Gr\"o{\ss}encharakters, and this can be understood already classically.  For simplicity, in the remainder of the paper we focus on trivial weight---there is already a lot to see in this case.  In Section \ref{sec:rank-four} we consider rank $n=4$.  We make explicit the transfer to Hilbert modular forms, where we have a precise understanding of the eigensystems and $L$-functions that can arise (Theorem \ref{thm: Clifford rank four}).  We exhibit by example all cases that can arise.  

Preparing to move to higher rank, in Section \ref{sub:theta-rallis} we define the theta series of an eigenform $\phi \in M(\Lambda)$, for example
\begin{equation}  \label{eqn:theta1}
\theta^{(1)}(Q)(q) = \sum_{m=0}^{\infty} r_m(Q) q^m 
\end{equation}
has coefficients $r_m(Q) \colonequals \#\{(x_1,\dots,x_n) \in \Z^n : Q(x_1,\dots,x_n)=m\}$.
The \defi{depth} of $\phi$ is the smallest $g \geq 0$ such that $\thet{g}(\phi) \neq 0$.  We then state a theorem of Rallis (Theorem \ref{RallisThm}) relating the $L$-series of an eigenform to its theta series for $g$ equal to its depth, and we consider the special cases of depth $0$ and $1$.  

In Section \ref{sec:rank-6-8}, we pursue rank $n\geq 6$.  We find many examples that we can describe completely in terms of classical modular forms.  The following statement is a simple example of what can be established this way; for this purpose, we label classical modular forms following the LMFDB \cite{lmfdb}.

\begin{thm}
Let $\Lambda$ be the lattice $A_6 \oplus A_2$ of rank $8$ and discriminant $21$.  Then $\#\Cls(\Lambda)=3$, and there are three Hecke eigenforms in $M(\Lambda)$ with eigenvalues
\[ \frac{p^7-1}{p-1} + \chi(p)p^3, \quad 
\frac{p(p^5-1)}{p-1} + a_p^2 - \chi(p)p^3, \quad
\frac{p(p^5-1)}{p-1} + b_p^2 - \chi(p)p^3
\]
for the operators $T_p$ with $p \neq 3,7$, where:
\begin{itemize}
\item $\chi$ is the quadratic character of discriminant $21$, and \item $a_{p}$ and $b_p$ are the coefficients of the classical newforms of weight $4$ and level $21$ with LMFDB labels \modform{21}{4}{c}{a} and \modform{21}{4}{c}{b}, respectively.
\end{itemize}
\end{thm}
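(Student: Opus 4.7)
The plan is to combine explicit Hecke computations with a theta-lift identification using the framework of Section~\ref{sub:theta-rallis}. Using the Kneser neighbour algorithm of Section~\ref{sec:algs}, we enumerate the genus of $\Lambda = A_6 \oplus A_2$ to confirm $\#\Cls(\Lambda) = 3$, obtaining a basis $e_1, e_2, e_3$ of $M(\Lambda)$ indexed by genus representatives. We then assemble the matrix of $T_p$ on this basis for several small primes $p \notin \{3, 7\}$ and diagonalize them simultaneously, producing three $\Q$-rational eigenforms $\phi_1, \phi_2, \phi_3$. Numerical comparison with the claimed formulas at each computed $p$ identifies $\phi_1$ (the constant function $e_1 + e_2 + e_3$, up to scaling) as the Eisenstein piece and $\phi_2, \phi_3$ as cuspidal.

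The eigenvalue of $\phi_1$ is the total $p$-neighbour count for any lattice in the genus; a standard local density computation (or Siegel's mass formula) evaluates this count to $(p^7-1)/(p-1) + \chi(p) p^3$. For $\phi_2$ and $\phi_3$ we form the theta series $\theta^{(1)}(\phi_j)$ via \eqref{eqn:theta1}, landing in the space of weight-$4$ modular forms on $\Gamma_0(21)$ with character $\chi$. Comparing Fourier coefficients up to the Sturm bound identifies $\theta^{(1)}(\phi_2)$ and $\theta^{(1)}(\phi_3)$ with nonzero scalar multiples of \modform{21}{4}{c}{a} and \modform{21}{4}{c}{b} respectively, showing that each $\phi_j$ has depth exactly $1$ and classical theta lift the stated newform. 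Rallis's theorem (Theorem~\ref{RallisThm}) then factors the standard $L$-function of $\phi_j$ as the symmetric-square $L$-function of the corresponding newform together with the Eisenstein Euler factors from the orthogonal complement in the theta tower; reading off local Satake parameters term by term recovers $a_p^2 - \chi(p)p^3$ as the $\mathrm{Sym}^2$ contribution and $(p(p^5-1))/(p-1)$ as the Eisenstein contribution, yielding the claimed eigenvalue formula at every $p \notin \{3, 7\}$.

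The main obstacle is the finite but delicate task of verifying that the two theta series $\theta^{(1)}(\phi_2)$ and $\theta^{(1)}(\phi_3)$ are both nonzero and linearly independent, so that they pair uniquely with the two newforms \modform{21}{4}{c}{a} and \modform{21}{4}{c}{b}; one must carry the $q$-expansion past the Sturm bound for $S_4(\Gamma_0(21),\chi)$ and also rule out coincidences such as both lifts landing in the same one-dimensional subspace, or one lift vanishing entirely. Once this finite check is completed, the passage from agreement on finitely many Hecke eigenvalues to the stated formula for all good primes is automatic via Rallis's theorem and the newform uniqueness of the weight-$4$ cusp forms involved.
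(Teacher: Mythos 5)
Your proposal is correct and follows essentially the same route as the paper: Example \ref{ex:d21} enumerates the genus ($\phi_1=[1,1,1]$, $\phi_2=[7,-15,84]$, $\phi_3=[3,-4,-32]$), observes that the cuspidal eigenforms have depth $1$ with $\theta^{(1)}(\phi_i)$ equal to the two newforms in $S_4(\Gamma_0^{(1)}(21),\chi_{21})$, and invokes Theorem \ref{depth01} (the depth-$0$/depth-$1$ specialization of Rallis's theorem) to obtain exactly the stated $L$-polynomial factorizations and eigenvalue formulas. Your extra care about nonvanishing and separation of the two theta lifts is the same finite check the paper performs implicitly; note also that Hecke-equivariance of $\theta^{(1)}$ plus the absence of oldforms in $S_4(\Gamma_0(21),\chi_{21})$ forces the pairing once nonvanishing is verified.
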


This theorem is established in Example \ref{ex:d21}.  There, we give two other ways to write the statement in the theorem: first, in terms of the $L$-functions of these eigenforms; and second, as an explicit expression for the matrix of the Hecke operator $T_p$ acting on $M(\Lambda)$.  

Further investigations in rank $6$ led us to the following conjecture.

\begin{conj} \label{conj:wt-6-ker-theta-2-0}
  Let $G_p$ be the genus of lattices of rank $6$ and discriminant $D=p$.  
Then the kernel of $\thet{2}$ on
  $G_p$ has dimension equal to the
  number of classes in $G_p$ of lattices with no
  automorphism of determinant $-1$.  
\end{conj}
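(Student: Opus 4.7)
The plan is to analyze $\thet{2}\colon M(\Lambda)\to \M^{(2)}_3$ as a Hecke-equivariant linear map, where $\M^{(2)}_3$ denotes the relevant space of Siegel modular forms of degree $2$ and weight $n/2=3$ (at appropriate level determined by $p$). Since $M(\Lambda)$ has basis indexed by $G_p$, writing $\phi \in M(\Lambda)$ as a function on $G_p$ gives
\[ \thet{2}(\phi) = \sum_{[\Lambda']\in G_p} \phi([\Lambda']) \, \theta^{(2)}_{\Lambda'}, \]
so $\dim\ker\thet{2}$ equals the dimension of the space of $\C$-linear relations among the Siegel theta series $\{\theta^{(2)}_{\Lambda'}\}_{[\Lambda']\in G_p}$. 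Writing $a$ for the number of classes with $\O(\Lambda') = \SO(\Lambda')$, i.e., admitting no automorphism of determinant $-1$, the conjecture predicts $\dim\ker\thet{2} = a$.

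The next step is to exploit the Hecke-equivariance of $\thet{2}$ (for Kneser-neighbour operators on $M(\Lambda)$ and Siegel Hecke operators on $\M^{(2)}_3$): the kernel is a Hecke submodule, so it suffices to test each Hecke-isotypic component of $M(\Lambda)$ individually. For each eigenform $\phi$, Rallis's theorem (Theorem \ref{RallisThm}) characterizes the condition $\thet{2}(\phi)=0$ in terms of the vanishing of a certain $L$-value attached to $\phi$. The task thus becomes to match the list of eigensystems whose relevant $L$-value vanishes with the count $a$ of chiral classes.

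The bridge I would pursue between the automorphic and lattice-theoretic sides is the theta correspondence for the dual pair $(\O(6),\Sp_4)$, combined with the accidental isomorphism $\mathrm{Spin}(6)\cong \SL_4$, via which automorphic representations of $\SO(6)$ can be parametrized. An automorphic form on $\O(\Lambda)(\mathbb{A})$ is determined by its restriction to $\SO(\Lambda)(\mathbb{A})$ together with a character on the $\mu_2$-quotient, and the theta lift to $\Sp_4$ depends sensitively on this character. For a class $[\Lambda']$ with $\O(\Lambda') = \SO(\Lambda')$, I would expect an obstruction to the compatible global extension of the natural sign character, producing a distinguished element of $\ker\thet{2}$; one then needs to show that these contributions are independent and exhaust the kernel, giving exactly $a$.

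The main obstacle is making this $\SO$-to-$\O$ extension argument precise simultaneously at all places: sign issues on metaplectic covers of $\Sp_4$ and the precise formulation of Arthur's classification for $\SO(6)$ are delicate, and exhausting the kernel (i.e., ruling out additional relations from, say, cuspidal forms with depth $\geq 3$) requires independent input. As a practical first step, I would verify the conjecture computationally for primes $p$ up to a few hundred using the algorithms of Section \ref{sec:algs}; this would both sharpen the evidence and may reveal explicit combinatorial coincidences among the $\theta^{(2)}_{\Lambda'}$ that suggest a more direct proof via the Siegel--Weil formula applied to the Petersson pairing matrix of the $\theta^{(2)}_{\Lambda'}$.
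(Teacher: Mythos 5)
You should first note that the paper does not prove this statement: it is Conjecture \ref{conj:wt-6-ker-theta-2-0}, which the authors verified computationally for $p<1000$ (subject to the caveat, which they flag, that there is no Sturm-type bound certifying membership in $\ker\thet{2}$), and for which they explicitly say they have no heuristic or conceptual explanation. So there is no proof in the paper to compare against, and your proposal---which you yourself present as a plan with open obstacles---does not close the gap either.

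On the substance: the reduction of $\dim\ker\thet{2}$ to linear relations among the $\theta^{(2)}_{\Lambda'}$, and the observation that the kernel is a Hecke submodule so one can work eigenform by eigenform, are both correct and are implicitly how the authors compute. But the two load-bearing steps are missing. First, Theorem \ref{RallisThm} does not characterize when $\thet{2}(\phi)=0$; it is an identity of $L$-functions valid \emph{assuming} $\thet{g}(\phi)\neq 0$. What you would need is the Rallis inner product formula expressing $\langle\thet{2}(\phi),\thet{2}(\phi)\rangle$ as a special $L$-value times local factors, and deciding when that value vanishes for the relevant eigenforms is itself an open problem, not a reduction. Second, and more seriously, the proposed correspondence between classes with $\OO(\Lambda')=\SO(\Lambda')$ and a basis of $\ker\thet{2}$ is asserted rather than argued: $a$ is a class-by-class combinatorial count, while $\ker\thet{2}$ is a Hecke-stable subspace with no distinguished basis indexed by classes, and you give no mechanism producing a kernel vector from a chiral class, no independence argument, and no exhaustion argument (your own caveat about depth $\geq 3$ forms). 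The germ worth keeping is that $a$ equals the difference between the proper (i.e., $\SO(V)$-) class number and the class number, hence the dimension of the determinant-anti-invariant part of the space of functions on the proper class set; this makes the conjecture smell like a theta-dichotomy or Kudla--Rallis conservation statement for the pair $(\OO(6),\Sp_4)$, and articulating that precisely would already be a contribution. Your closing suggestion of numerical verification for a few hundred primes is essentially what the authors already carried out.
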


We verified this conjecture for $p < 1000$ (subject to the limitations on our ability to rigorously determine $\ker \thet{2}$); however, we do not have a heuristic or conceptual reason which might explain it.  

As the discriminant and rank increase, we soon encounter Siegel modular forms of higher genus.  In some cases these can still be related explicitly to classical modular forms via lifts---see Example \ref{ex:rank-10} for the genus of lattices containing $D_4 \oplus D_6$.  In the remaining cases, which we think of as being \emph{genuine} depth at least $2$, we consider it a feature of working with definite orthogonal modular forms that we can compute some higher genus Siegel eigenforms explicitly, but indirectly.

Finally, in Section \ref{sec:congruences} we pursue congruences  between eigenvalues of classical modular form and of nonlift Siegel eigenforms.  We propose Conjecture \ref{cong} which predicts such congruences within the framework of Eisenstein congruences.  In some cases, these congruences can easily be proven by explicit computation with orthogonal modular forms: an illustrative example is as follows.

\begin{thm} \label{thm:disc-53-again}
The congruence
\begin{equation} 
a_{1,p^2}(F) \equiv a_p(f)^2 - (1+\chi_{53}(p))p^3 + p^5+p \pmod{\frakq} 
\end{equation}
holds for all primes $p \neq 53$, where:
\begin{itemize}
\item $F \in S_4(\Gamma_0^{(2)}(53),\chi_{53})$ is a nonlift Siegel eigenform of weight $4$, level $53$, and quadratic character $\chi_{53}$ whose Hecke eigenvalues $a_{1,p}(F),a_{1,p^2}(F)$ lie in the ring of integers of the sextic number field $K\colonequals \Q(\{a_{1,p}(F),a_{1,p^2}(F)\}_p)$ defined by
\[ x^6-2x^5-290x^4-388x^3+14473x^2+11014x-81256; \]
\item $\frakq$ is the unique prime of norm $397$ in the ring of integers of $K$; and 
\item $a_p(f)$ are the Hecke eigenvalues of the classical modular form $f$ of weight $4$, level $53$, and quadratic character with LMFDB label \modform{53}{4}{b}{a}.
\end{itemize}
\end{thm}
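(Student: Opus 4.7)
The plan is to exhibit the congruence as a direct consequence of a congruence in the finite-dimensional Hecke module $M(\Lambda)$ for a suitable even-rank definite lattice $\Lambda$ of discriminant $53$, chosen so that the theta lifts attached to eigenforms in $M(\Lambda)$ produce both the Siegel eigenform $F$ of weight $4$ and genus $2$ and an auxiliary form reflecting $f$.  In view of the target weight, a natural candidate is a rank-$8$ lattice $\Lambda$, so that $\thet{2}$ of an orthogonal modular form of trivial weight is a genus-$2$ Siegel form of weight $n/2=4$.

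First, we would enumerate $\Cls(\Lambda)$ and compute the Kneser--Hecke matrices $T_p$ acting on $M(\Lambda)$ for enough small primes using the algorithms of Section~\ref{sec:algs}, then diagonalize the action over $\overline{\Q}$.  Among the resulting eigensystems we expect to isolate a genuine depth-$2$ eigenform $\phi$ whose theta series $\thet{2}(\phi)$ is, up to scalar, the Siegel eigenform $F$, and a second eigenform $\psi$ of lower depth whose Hecke eigenvalues encode $a_p(f)^2$, arising as a tensor-square-type contribution of $f$ together with Eisenstein pieces.  Applying Rallis's theorem (Theorem~\ref{RallisThm}) at depth $g=2$ then yields an explicit identity expressing the spinor-type Hecke eigenvalue $a_{1,p^2}(F)$ in terms of the Kneser--Hecke eigenvalue of $\phi$, together with the Eisenstein correction $p^5+p-(1+\chi_{53}(p))p^3$ appearing on the right-hand side of the claim.

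The congruence itself is then established by a finite explicit computation modulo $\frakq$.  Since $M(\Lambda)$ is finite-dimensional and the Kneser--Hecke operators $T_p$ generate a commutative algebra $\mathbb{T} \subset \mathrm{End}(M(\Lambda))$, it suffices to produce a common eigensystem for $\phi$ and $\psi$ in $\mathbb{T} \otimes (\mathcal{O}_K/\frakq)$.  This reduces to a verification on a finite generating set of $\mathbb{T}$: once the eigensystems coincide modulo $\frakq$ on the generators, they coincide on all of $\mathbb{T}$, and the claimed congruence follows uniformly in $p \neq 53$, fitting the Eisenstein congruence framework of Conjecture~\ref{cong}.

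The main obstacle will be the identification step: matching $\thet{2}(\phi)$ with $F$ uniquely within $S_4(\Gamma_0^{(2)}(53),\chi_{53})$ by comparing Hecke eigenvalues at enough primes, and verifying that $\thet{2}(\phi) \neq 0$ so that Rallis applies at depth $2$ (as guided in the rank-$6$ setting by Conjecture~\ref{conj:wt-6-ker-theta-2-0}).  Pinning down $\frakq$ as precisely the prime of norm $397$---rather than a proper divisor or multiple---requires careful integral bookkeeping in the $\mathcal{O}_K$-lattice structure on $M(\Lambda)$; but with these in place, the remainder is a routine machine computation.
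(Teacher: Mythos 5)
Your proposal matches the paper's proof in all essentials: the paper works with the unique rank-$8$ genus of discriminant $53$ (class number $8$), applies Theorem \ref{RallisThm} at depths $1$ and $2$ to obtain the eigenvalue formulas involving $a_p(f)^2$ (via $\Sym^2$ of the genus-$1$ theta lift) and $a_{1,p^2}(F)$ respectively, certifies that $F=\thet{2}(\phi)$ is a nonlift by exhibiting irreducible degree-$4$ factors in $L_p(\phi,T)$ for small $p$, and then propagates a single finite mod-$\frakq$ verification to all $p\neq 53$ and all $k$. The only cosmetic difference is the propagation mechanism: the paper normalizes the two relevant eigenvectors integrally and observes that they are congruent modulo $\frakq$ as vectors in $M(\Lambda)$, which forces all Hecke eigenvalues to agree mod $\frakq$ because the Kneser matrices are integral, whereas you check agreement of the two eigensystems on a generating set of the Hecke algebra---both are valid finite certificates.
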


We prove Theorem \ref{thm:disc-53-again} in Example \ref{ex:disc-53-again}.  Remarkably, we do not exhibit the Siegel eigenform directly; however, it would be interesting to do so.  

\subsection*{Acknowledgements}

The authors would like to thank Neil Dummigan for explaining how our congruences fit into the general framework of his paper \cite{db} and Gonzalo Tornar{\'\i}a for helpful discussions.  Assaf and Voight were supported by a Simons Collaboration grant (550029, to Voight). Ingalls was supported by an NSERC Discovery Grant RGPIN-2017-0462. Secord was supported by the I-CUREUS program.

\section{Setup and notation}\label{sec:setup}


In this section, we provide basic setup and notation.  For convenience and to highlight ideas, we take the ground field to be the rational numbers; however, much of what we present extends to a general totally real base field.  For further reading, see e.g.\ Greenberg--Voight \cite{gv}, Rama--Tornar{\'\i}a \cite{RamaTornaria}, or Gross \cite{Gross}.

\subsection*{Lattices}

Let $(V,Q)$ be a positive definite quadratic space over $\Q$ with associated bilinear form
$B(x,y)=Q(x+y)-Q(x)-Q(y)$ for $x,y \in V$.  
Let $\Lambda\subset V$ be a $\Z$-lattice of rank $n$ and (half-)discriminant $D$.  Rescaling $Q$, we may suppose without loss of generality that $Q(\Lambda) \subseteq \Z$, and we say that $\Lambda$ is \defi{integral}.  We say that $\Lambda$ is \defi{maximal} if $\Lambda$ is not properly contained in another integral lattice.  
%
Choosing a basis $e_1,\dots,e_n$ for $\Lambda \simeq \Z^n$, the \defi{Gram matrix} of $\Lambda$ is $(B(e_i,e_j))_{i,j=1,\dots,n} \in \M_n(\Z)$, with diagonal entries $2Q(e_i) \in 2\Z$ for $i=1,\dots,n$.  

The \defi{orthogonal group} $\OO(V)$ of $V$ is the group of $\Q$-linear automorphisms of $V$ that preserve the quadratic form, the \defi{isometries} of $V$; the orthogonal group $\OO(\Lambda)$ of $\Lambda$ is the subgroup of $\OO(V)$ that stabilizes $\Lambda$.  
If $\Lambda'=\gamma(\Lambda)$ for $\gamma \in \OO(V)$, we say $\Lambda$ is \defi{isometric} to $\Lambda'$ and we write simply $\Lambda \simeq \Lambda'$.  


Repeating these definitions but with $\Q_p$ and $\Z_p$ in place of $\Q$ and $\Z$, respectively, we can consider the completions $\Lambda_p \colonequals \Lambda \otimes \Z_p \subset V_p \colonequals V \otimes_{\Q} \Q_p$ for primes $p$.  

The \defi{genus} of $\Lambda$ is the set of lattices
\begin{equation} 
\Gen(\Lambda) \colonequals \{\Lambda' \subset V : \Lambda_p' \simeq \Lambda_p \text{ for all primes $p$}\}, 
\end{equation}
i.e., the set of lattices which become isometric to $\Lambda$ in each completion.


The orthogonal group $\OO(V)$ acts on the genus $\Gen(\Lambda)$, and we define the \defi{class set} to be the set of global isometry classes
\begin{equation} 
\Cls(\Lambda) \colonequals \OO(V)\backslash\!\genus(\Lambda).
\end{equation}
By the geometry of numbers we have $h=h(\Lambda) \colonequals \#\Cls(\Lambda) < \infty$ (see the First Finiteness Theorem of Minkowski on p.$99$ of \cite{Siegel}).  Let $\Cls(\Lambda) = \{[\Lambda_1],\dots,[\Lambda_h]\}$ with $\Lambda=\Lambda_1$. 

Following Kneser \cite{Kneser}, for a prime $p$ (allowing $p=2$) and integer $1\leq k\leq \lfloor n/2 \rfloor$, a lattice $\Pi \subset V$ is called a \defi{$p^k$-neighbour} of $\Lambda$, and we write $\Lambda \sim_{p^k} \Pi$, if there exist group isomorphisms 
\[\Lambda/(\Lambda\cap\Pi)\cong (\Z/p\Z)^k \cong \Pi/(\Lambda\cap\Pi).\]
There are evidently only finitely many $p^k$-neighbours of $\Lambda$, and if $\Pi \sim_{p^k} \Lambda$ is a $p^k$-neighbour, then $\Pi \in \Gen(\Lambda)$.  For any $p \nmid D$, the class set $\Cls(\Lambda)$ is connected under the $p$-neighbour relation, and lattices in the same genus have the same number of $p^k$-neighbours.


\subsection*{Orthogonal modular forms}

The space of \defi{orthogonal modular forms} for $\Lambda$ (of trivial weight) is the $\C$-vector space of functions on $\Cls(\Lambda)$:
\begin{equation} 
M(\Lambda) \colonequals \{\phi:\Cls(\Lambda)\rightarrow \C\}. 
\end{equation}
(We often implicitly work with the subspace of functions with values in $\Q$, or in a number field.)  A basis for this vector space is given by the characteristic functions on the set $\Cls(\Lambda)$: explicitly, we take $\phi^{(1)},\dots,\phi^{(h)}$ defined by $\phi^{(i)}([\Lambda_j])=\delta_{ij}=1,0$ according as $i=j$ or not.  For $c_1,\dots,c_h \in \C$, we abbreviate
\begin{equation}
[c_1,...,c_h] \colonequals \sum_{i=1}^h c_i\phi^{(i)} \in M(\Lambda),
\end{equation}
noting that this depends on the implicit ordering of the elements in $\Cls(\Lambda)$.

More generally, given a finite-dimensional (algebraic) representation $\rho \colon \OO(V) \circlearrowright W$, we may similarly define a space of orthogonal modular forms $M(\Lambda,W)$ of \defi{weight} $W$: these are functions on $\Cls(\Lambda)$ with values in $W$, equivariant with respect to the orthogonal group, with
\begin{equation} \label{eqn:MlambdaVWweight}
M(\Lambda,W) \simeq \bigoplus_{i=1}^h W^{\OO(\Lambda_i)}
\end{equation}
where $W^{\OO(\Lambda_i)}$ denotes the fixed subspace of $W$ under the finite group $\OO(\Lambda_i)$.  We omit the details of this case, as we mostly restrict our attention below to the case where $W$ is the trivial representation: for more detail, see e.g.~Rama--Tornar{\'\i}a \cite{RamaTornaria}*{\S 1.2} or Greenberg--Voight 
\cite{gv}*{\S 2, (4)}.

We define an inner product on $M(\Lambda)$ by
\begin{equation}  \label{eqn:innerprod}
\langle \phi^{(i)}, \phi^{(j)} \rangle = \frac{\delta_{ij}}{\#\OO(\Lambda_i)} \end{equation}
extending by linearity.  The constant function $[1,1,\dots,1] \in M(\Lambda)$ is called \defi{Eisenstein}; we define the \defi{cuspidal subspace} $S(\Lambda)\subset M(\Lambda)$ to be the orthogonal complement of the constant functions.

The $p^k$-neighbour relation defines linear operators on $M(\Lambda)$ as follows: for $p \nmid D$, we define the \defi{Hecke operator}
\begin{equation} \label{eqn:tpkhecke}
\begin{aligned}
T_{p,k} \colon M(\Lambda) &\to M(\Lambda) \\
T_p(f)([\Lambda']) &= \sum_{\Pi' \sim_{p^k} \Lambda'} f([\Pi']).
\end{aligned} 
\end{equation}
More concretely, the matrix of $T_{p,k}$ in the basis of characteristic functions has $(i,j)$-entry equal to the number of $p^k$-neighbours of $\Lambda_j$ isometric to $\Lambda_i$.  
These operators pairwise commute and are self-adjoint with respect to the inner product \eqref{eqn:innerprod} so are simultaneously diagonalizable.  The \defi{Hecke algebra} $\calH(\Lambda)$ is the finite-dimensional $\Q$-algebra generated by the Hecke operators $\{T_{p,k} : p \nmid D\}_{p,k}$; it is an Artinian commutative ring.  An \defi{eigenform} in $M(\Lambda)$ is a simultaneous eigenvector for the Hecke algebra.  

%


The Eisenstein function is always an eigenform; its eigenvalue $N_{p,k}$ under $T_{p,k}$ is the total number of $p^k$-neighbours of $\Lambda$. For example, we have 
\begin{equation}
N_{p,1} =\sum_{i=0}^{n-2} p^i +\chi_{D^*}(p)p^{\frac{n}{2}-1}
\end{equation}
where $D^* = 1$ if $n$ is odd and $D^*=(-1)^{\frac{n}{2}}D$ if $n$ is even, and $\chi_d = \left(\frac{d}{\cdot}\right)$ is the quadratic character attached to $\Q(\sqrt{d})$.  


Let $\phi \in M(\Lambda)$ be an eigenform with $T_{p,k}(\phi) = \lambda_{p,k} \phi$.  We define the (automorphic) $L$-function attached to $\phi$ as an Euler product
\[ L(\phi,s) \colonequals \prod_p L_p(\phi,p^{-s})^{-1} \]
where $L_p(\phi,T) \in 1 + T\C[T]$ is a polynomial of degree $n$ defined in terms of the eigenvalues $\lambda_{p,k}$ via the Satake transform: see Murphy \cite{Murphy}*{\S 3} for an explicit description and precise formulas in ranks $n \leq 8$ \cite{Murphy}*{p.~56--57}.  For example, for $n=4$ we have
\begin{equation} 
L_p(\phi,T) = 
\begin{cases}
1 - \lambda_{p,1}T + p(\lambda_{p,2}+2)T^2 - \lambda_{p,1}p^2 T^3 + p^4 T^4, & \text{ if $\chi_{D^*}(p)=1$;} \\
(1-pT)(1+pT)(1 - \lambda_{p,1}T + p^2T^2), & \text{ if $\chi_{D^*}(p)=-1$}.
\end{cases}
\end{equation} 
In particular, note that in both cases the coefficient of $T$ is $-\lambda_{p,1}$. This is the case for arbitrary rank, a fact that we will need to use later.

\section{Algorithms} \label{sec:algs}

In this section, we review algorithms for computing the Hecke module structure of orthogonal modular forms, and we report on our implementation in \textsf{Magma} \cite{Magma}, available online \cite{amf}.  This implementation allows a general totally real base field $F$, but again for simplicity we restrict our presentation to the case $F=\Q$.  For further background reading, see e.g.\ Greenberg--Voight \cite{gv}*{Section 6}.

\subsection*{Algorithms}


The algorithms we require include the following:
\begin{enumerate}
\item \label{item: omf} \texttt{OrthogonalModularForms}$(\Lat, W)$: construct from a lattice $\Lat$ and a weight $W$ a basis for the space of orthogonal modular forms $M(\Lat, W)$. The returned data type stores the genus of the lattice $\Gen(\Lat)$ and the bases for each subspace $W^{\OO(\Lat_i)}$. 
\item \label{item: Hecke operator} \texttt{HeckeOperator}$(M, p, k)$: the matrix representing the Hecke operator $T_{p,k}$ on the space $M = M(\Lat, W)$ (with respect to the computed basis).
\item \label{item: Hecke Eigenforms} \texttt{HeckeEigenforms}$(M)$: a list of eigenforms for the Hecke algebra, with one representative for every Galois orbit. 
\item \label{item: Hecke Eigenvalue} \texttt{HeckeEigenvalue}$(f, p, k)$: for an eigenform $\phi$, the eigenvalue $\lambda_{p,k}$ such that $T_{p,k}(\phi) = \lambda_{p,k} \phi$. 
\item \label{item: LPolynomial} \texttt{LPolynomial}$(f,p)$: the $L$-polynomial $L_p(\phi,T)$ of the eigenform $\phi$.
\end{enumerate}

For \texttt{OrthogonalModularForms}, in view of \eqref{eqn:MlambdaVWweight} we need to enumerate the genus, and then compute automorphism groups of the lattices; we obtain a basis by computing fixed subspaces via standard linear algebra.  The enumeration of representatives of the genus of a lattice using $p$-neighbours has been studied in great detail, with many practical improvements.

We briefly elaborate upon the main workhorse \texttt{HeckeOperator} \eqref{item: Hecke operator}.  By \eqref{eqn:tpkhecke}, the Hecke operators are obtained by summing over $p^k$-neighbours.  An algorithm for computing Hecke operators using $p^k$-neighbours is described in generality (allowing for other algebraic groups and arbitrary weights) in Greenberg--Voight \cite{gv}; it was implemented for orthogonal modular forms of trivial weight in Magma \cite{Magma} by Greenberg, Jeffery Hein, and Voight.  (For lattices over number fields, we rely upon an implementation of Markus Kirschmer and David Lorch.)  Beyond enumerating $p^k$-neighbours using isotropic subspaces, it relies on the algorithm of Plesken--Souvignier \cite{PS} for isometry testing between lattices, which was implemented in \textsf{Magma} \cite{Magma} by Souvignier, with further refinements to the code contributed by Allan Steel, Gabriele Nebe, and others. 

\begin{algorithm}[\texttt{HeckeOperator}$(M,p,k)$] \label{alg: HeckeOperator}
\ 
\begin{algorithmic}[1]
\STATE Let $\gen(\Lat) = \{ \Lat_1, \ldots, \Lat_h \}$ be the genus representatives.
\STATE Let $\{ v_{(i,l)} \}_{i,l}$ be a basis for $M$ such that $\{ v_{(i,l)} \}_l$ is a basis for $W^{O(\Lat_i)}$. 
\FOR{$i = 1,2,\ldots, h$}
\STATE Let $t_{i,(j,m)} \colonequals 0$ for all $m$ and $j$. 
\FOR{$\Lat' \sim_{p^k} \Lat_i$}
\STATE Find $j$ and $\gamma_{ij} \in \OO(V)$ such that $\Lat' = \gamma_{ij} \Lat_j$ by isometry testing.
\STATE Let $t_{i,(j,m)} +\colonequals \gamma_{ij} v_{(j,m)}$ for all $m$.
\STATE Write $t_{i,(j,m)} = \sum_{l} t_{(i,l),(j,m)} v_{(i,l)} $ for all $m$ and $j$.
\ENDFOR
\ENDFOR
\RETURN $T = (t_{(i,l),(j,m)})$.
\end{algorithmic}
\end{algorithm}

The complexity of Algorithm \ref{alg: HeckeOperator} is dominated by $O(h^2 p^{k(n-k-1)})$ isometry tests between lattices, if done \emph{naively}---for a refined approach, see more on isometry testing below.  
Building on their implementation, Assaf extended the implementation to support higher rank lattices, Hermitian lattices (for unitary groups), and arbitary weight.  This implementation includes highest weight representations for orthogonal and unitary groups in characteristic $0$ as explained e.g.~by Fulton--Harris \cite{FultonHarris}.  (An implementation of these representations over finite fields exists in \textsf{Magma} \cites{CMT,deGraaf} by Willem de Graaf and others, based on the LiE system \cite{vanLeeuwen}.)  

\texttt{HeckeEigenforms}  \eqref{item: Hecke Eigenforms} is accomplished using linear algebra on the output of sufficiently many calls to \texttt{HeckeOperator}.  We compute the operators $T_{p} = T_{p,1}$ for small primes until the simultaneous eigenspaces are irreducible over $\Q$.  For large enough spaces with a maximal lattice, the single operator $T_2$ often suffices in practice to observe eigensystems occuring with multiplicity one.  

\begin{rem}
We expect that this multiplicity one phenomenon can be explained by work of Aizenbud--Gourevitch--Rallis--Schiffmann \cite{agrs}.  However, when the lattice is not maximal, multiplicity one need not hold due to the presence of oldforms.  It would be interesting to study these multiplicities in more detail.
\end{rem} 

\texttt{HeckeEigenvalue} \eqref{item: Hecke Eigenvalue} slightly improves on the preceding by using the fact that $\phi$ is an eigenform.  Indeed, if we write 
$\phi = \sum_i \phi^{(i)}$ where $\phi^{(i)} \in W^{\OO(\Lambda_i)}$ (overlapping with previous notation), we may choose an index $i$ such that $\phi^{(i)} \ne 0$ and compute only loop number $i$ (Step 3) in Algorithm \ref{alg: HeckeOperator}. This already yields $T_{p,k} \phi^{(i)} = \lambda_{p,k} \phi^{(i)}$, from which we can extract $\lambda_{p,k}$. Thus, Algorithm \texttt{HeckeEigenvalue} \eqref{item: Hecke Eigenvalue} saves a factor $h$ in its running time in comparison to \texttt{HeckeOperator}.  

Finally, \texttt{LPolynomial} \eqref{item: LPolynomial} first uses \texttt{HeckeEigenvalue} \eqref{item: Hecke Eigenvalue} to compute the eigenvalues $\lambda_{p,k}$ for $k=1,2,\ldots,\lfloor n/2 \rfloor$.  It then produces the $L$-polynomial from these eigenvalues using the Satake transform, as described by Murphy \cite{Murphy}*{\S 3}.  The running time complexity of Algorithm \eqref{item: LPolynomial} is dominated by $O(hp^{n(n-2)/4})$ applications of isometry testing.

The running time is polynomial in $p$ (exponential in $\log p$), and the exponent is quadratic in the rank $n$, making computations in very high rank almost infeasible.  However, in some of the applications described in the paper, we only require knowledge of the $L$-polynomial at a single prime. In any case, even improvements by constant factors (depending on the rank $n$) are of practical importance.
We turn now to discuss several such improvements. 

\subsection*{Genus enumeration}
\label{rem:unbalanced-lattices-copy}
For some of the genera appearing in our examples, a straightforward attempt to find all of the lattices in the genus and their automorphism groups using \textsf{Magma} takes a long time.  As an example, consider the genus of lattices of rank $8$ and discriminant $p \equiv 1 \pmod{4}$. One of the lattices in this genus is generated by $E_7$ and a vector of norm $(p+3)/2$, and \defi{Magma}'s algorithm for finding the automorphism group of a lattice relies on listing all of the vectors of norm up to $m$, where $m$ is minimal such that these vectors span a sublattice of rank $n$ (equivalently, of finite index)---hence unnecessarily enumerating all elements of $E_7$ of norm up to $(p+3)/2$.  

Once this problem is recognized, it is easily dealt with: we compute directly with this lattice, relating its automorphism group to that of $E_7$.  In cases where $p \equiv 1 \pmod 8$ or $p \equiv 1 \pmod{12}$, there are lattices generated by $A_7$, $D_7$, or $E_6 \oplus A_1$ and one vector of large norm that cause similar (but less severe) problems.  More generally, if we were trying to enumerate genera of lattices of rank $n$ we would directly find the lattices that have a large root sublattice of small discriminant and their automorphism groups. 

In light of this issue, our implementation offers an option for the user to supply the lattices in a genus together with their automorphism groups.

\subsection*{Isometry testing}

To test for isometry, we rely on standard algorithms for $\Z$-lattices. Since our genus representatives are fixed while computing Hecke operators, we are able to perform some precomputation steps in order to improve the running times. If $\Lat_1,\dots,\Lat_h$ are representatives for $\Cls(\Lambda)$, we compute the first few coefficients of its theta series $\theta^{(1)}(\Lambda_i)$ (as defined in \eqref{eqn:theta1}) and cache them before enumerating the $p$-neighbours.  Since these are isometry invariants, we can compute them for every $p$-neighbour, and test for isometry only when they match.  If the cached data determines the genus representative uniquely and the weight is trivial, we do not need to test for isometry at all, eliminating the need for any isometry testing.  In higher weight, one needs to compute the actual isometry, but this can be computed on the correct representative so the total number of isometry tests is equal to the number of neighbours.

There are several other possible ways to exploit the ability to precompute data in order to reduce the running time of isometry testing.  For example, the ultimate representative would be to compute a \emph{canonical form} for the lattice \cite{CanonicalForm} (or in rank $\leq 4$, we could compute a Minkowski-reduced representative).  Although this seems to work very well when tested on its own terms, we have not been able to take advantage of this speedup in computing modular forms because its implementation does not easily plug into our implementation in \textsf{Magma}.  We have also attempted to use greedy reduction, as described by Nguyen--Stehl\'e \cite{NS}. However, as the reduction process does not yield a unique representative, one has to determine the orbits of the greedy-reduced lattices. The precomputation of these orbits turned out to be slower than computing the Hecke operators $T_{p,k}$ in practice. 

\subsection*{Automorphism group and time/memory trade-off}

The algorithms \texttt{HeckeOperator} and \texttt{HeckeEigenvalue} for computing the Hecke operator $T_{p,k}$ and its eigenvalues has naive running time complexity of $O(h^2 p^{k(n-k-1)})$ isometry tests, while requiring only $O(1)$ memory.  In the presence of memory resources, we leverage this to gain some improvement, even if by a constant, as follows.

The group $\OO(\Lat)$ acts on the set of $p^k$-neighbours by isometries, hence it suffices to test isometries on a set of orbit representatives. The naive time/memory trade-off is then to precompute the orbits of $\OO(\Lat)$ on neighbours by union find, at the cost of $O(p^{k(n-k-1)})$ memory. An alternative is obtained by keeping only a single orbit in memory at any given time, expanding it while computing its stabilizer.  In both cases, if we are computing \texttt{HeckeEigenvalue}, we can choose an index $i$ such that $\#\OO(\Lat_i)$ is maximized.

\subsection*{Timings}

We record the performance of our implementation. All the timings appearing here were measured on a standard desktop machine.
Each example has a corresponding code snippet included in the examples in our package \cite{amf}.   

\begin{ex} \label{ex:rank-4-disc-1369}
We consider the genus of maximal, integral lattices of rank $4$ and discriminant $D=37^2=1369$.  We compute $L$-polynomials for the eigenforms.  
A representative $\Lambda$ of the genus 
corresponds to the quadratic form
\begin{equation} 
Q(x,y,z,w) = x^2 + xz+xw + 2y^2 + yz + 2yw + 5z^2 + zw + 10w^2. 
\end{equation}
Running \texttt{OrthogonalModularForms}, we find $\#\Cls(\Lambda)=4$, with
representatives 
\begin{equation}
\begin{aligned}
\Lat_1 = \Lat = 
\left(
\begin{array}{cccc}
2 & 0 & 1 & 1 \\
0 & 4 & 1 & 2 \\
1 & 1 & 10 & 1 \\
1 & 2 & 1 & 20
\end{array}
\right), \quad &
\Lat_2 = 
\left(
\begin{array}{cccc}
2 & 1 & 0 & -1 \\
1 & 8 & -1 & -4 \\
0 & -1 & 10 & -2 \\
-1 & -4 & -2 & 12
\end{array}
\right), \\
\Lat_3 = 
\left(
\begin{array}{cccc}
4 & -1 & -1 & 0 \\
-1 & 4 & 2 & -1 \\
-1 & 2 & 6 & 2 \\
0 & -1 & 2 & 20
\end{array}
\right), \quad &
\Lat_4 = 
\left(
\begin{array}{cccc}
4 & -1 & -1 & 1 \\
-1 & 6 & 3 & -1 \\
-1 & 3 & 8 & 1 \\
1 & -1 & 1 & 10
\end{array}
\right).
\end{aligned}
\end{equation}
The first three Hecke operators have matrices (under the standard basis): \[T_{2,1} = \left(\begin{array}{cccc}1&1&1&1\\ 2&4&0&2\\ 2&0&4&2\\ 4&4&4&4\end{array}\right), \quad T_{3,1} = \left(\begin{array}{cccc}4&1&1&2\\ 2&9&0&3\\ 2&0&9&3\\ 8&6&6&8\end{array}\right), \quad T_{5,1} = \left(\begin{array}{cccc}4&4&4&4\\ 8&10&6&8\\ 8&6&10&8\\ 16&16&16&16\end{array}\right)\] The corresponding eigenforms and eigenvalues are: \begin{align*}\phi_1 &= [1,1,1,1] & \lambda_{2,1} &= 9, & \lambda_{3,1} &= 16, & \lambda_{5,1} &= 36, &\dots\\ 
\phi_2 &= [0,1,-1,0] & \lambda_{2,1} &= 4, & \lambda_{3,1} &= 9, & \lambda_{5,1} &= 4, &\dots\\ 
\phi_3 &= [4,-2,-2,1]  & \lambda_{2,1} &= 0, & \lambda_{3,1} &= 4, & \lambda_{5,1} &= 0, &\dots
\\ \phi_4 &= [4,1,1,-2] & \lambda_{2,1} &= 0, & \lambda_{3,1} &= 1, & \lambda_{5,1} &= 0, &\dots\end{align*} 

Below are the timings (in seconds) measured to produce $L$-polynomials for $p < 100$.  

\begin{table}[ht!]
\centering
\begin{tabular}{ |c|c|c|c|c|c|c|c|c|c|c|c|c|c| } 
 \hline
 $p$ & 2 & 3 & 5 & 7 & 11 & 13 & 17 & 19 & 23 & 29 & 31 & 37 & 41  \\ 
 \hline
 $\phi_1$ & 0.00& 0.01& 0.04& 0.07& 0.13& 0.16& 0.27& 0.33& 0.45& 0.71& 0.84& 0.00& 1.40 \\ 
 $\phi_2$ & 0.01& 0.02& 0.06& 0.10& 0.20& 0.26& 0.42& 0.55& 0.80& 1.22& 1.37& 0.01& 2.41 \\ 
 \hline
\end{tabular}
\begin{tabular}{ |c|c|c|c|c|c|c|c|c|c|c|c|c| } 
 \hline
 $p$ & 43 & 47 & 53 & 59 & 61 & 67 & 71 & 73 & 79 & 83 & 89 & 97 \\ 
 \hline
 $\phi_1$ & 1.50& 1.79& 2.24& 2.86& 3.04& 3.57& 4.01& 4.19& 4.99& 5.45& 6.28& 7.38  \\ 
 $\phi_2$ & 2.62& 3.12& 4.03& 5.09& 5.40& 6.54& 7.37& 7.44& 8.87& 9.80& 11.25& 13.46 \\ 
 \hline
\end{tabular}
\begin{adjustwidth}{2cm}{0cm}
\captionsetup{singlelinecheck=off, skip=4pt, width =\dimexpr \textwidth-2cm\relax}
\caption{Timings for a lattice of rank $4$ and $D = 37^2$}
\end{adjustwidth}
\label{table: rank 4 disc 37_2}
\end{table}

\noindent Note that running times for $\phi_2$ are longer. This is due to the fact that the support of $\phi_1$ (and $\phi_3,\phi_4$) includes a lattice with $\#\OO(\Lat) = 8$, while the support of $\phi_2$ only includes lattices with $\#\OO(\Lat) = 4$.  
Note also that $p=37$ is significantly faster, which is due to the ramification at $37$. 
\end{ex}



\begin{ex}\label{ex:rank-4-disc-193}
We consider the genus of maximal integral lattices of rank $4$ and $D = 193$. 
We find that $\#\Cls(\Lat) = 9$.
Below are the timings (in seconds) measured to produce $L$-polynomials for $p < 100$. 
\begin{table}[ht!]
\centering
\begin{tabular}{ |c|c|c|c|c|c|c|c|c|c|c|c|c|c| } 
 \hline
 $p$ & 2 & 3 & 5 & 7 & 11 & 13 & 17 & 19 & 23 & 29 & 31 & 37 & 41  \\ 
 \hline
 $\phi$ &  0.01& 0.03& 0.02& 0.06& 0.05& 0.07& 0.12& 0.15& 0.34& 0.32& 0.61& 0.47& 0.58 \\ 
 \hline
\end{tabular}
\begin{tabular}{ |c|c|c|c|c|c|c|c|c|c|c|c|c| } 
 \hline
 $p$ & 43 & 47 & 53 & 59 & 61 & 67 & 71 & 73 & 79 & 83 & 89 & 97 \\ 
 \hline
 $\phi$ & 1.10& 0.78& 1.00& 1.97& 1.36& 2.58& 1.80& 1.89& 2.21& 3.88& 2.81& 5.42  \\ 
 \hline
\end{tabular}
\caption{Timings for a lattice of rank $4$ and $D = 193$}
\label{table: rank 4 disc 193}
\end{table}
Note that in this case all forms have support including the lattice with the largest automorphism group.  The time taken is closely approximated by  $c_{\chi_{193}(p)} p^2$ seconds, where $c_1/c_{-1}$ is roughly $1.62$; this is due to the fact that for inert primes, there are fewer neighbours.
\end{ex}

\begin{ex}\label{ex:rank-6-disc-39}
We consider a genus of (maximal) integral lattices of rank $6$ and $D = 39$ containing a lattice $\Lambda \cong A_2 \oplus \Lambda_2$, where $\Lambda_2$ is a lattice of rank $4$ generated by $A_3$ and a vector of norm $4$ whose intersections with the $3$ roots corresponding to the vertices of the Dynkin diagram are $1, 0, 0$.  It takes less than a second to set up the space (of dimension $2$) and compute the two eigenforms.
We give timings (in seconds) measured to produce $L$-polynomials for $p < 20$ in Table \ref{table: rank 6 disc 39}.
\begin{table}[ht!]
\centering
\begin{tabular}{ |c|c|c|c|c|c|c|c|c| } 
 \hline
 $p$ & 2 & 3 & 5 & 7 & 11 & 13 & 17 & 19  \\ 
 \hline
 $\phi$ &  0.13& 0.30& 4.11& 12.05& 185.39& 341.80& 1209.94& 2228.58 \\ 
 \hline
\end{tabular}
\caption{Timings for a lattice of rank $6$ and $D = 39$}
\label{table: rank 6 disc 39}
\end{table}
\end{ex}

\begin{ex}\label{ex:rank-6-disc-75}
Consider the genus of integral lattices of discriminant $D = 75$ that contains the lattice $A_4 \oplus \Lambda_{15}$, where $\Lambda_{15}$ is a lattice of rank $2$ spanned by vectors $x, y$ of norm $4$ with $(x,y) = 1$.
We give timings (in seconds) measured to produce $L$-polynomials for $p < 20$ in Table \ref{table: rank 6 disc 75}.
\begin{table}[ht!]
\centering
\begin{tabular}{ |c|c|c|c|c|c|c|c|c| } 
 \hline
 $p$ & 2 & 3 & 5 & 7 & 11 & 13 & 17 & 19  \\ 
 \hline
 $\phi$ & 0.03& 0.23& 3.97& 14.31& 126.77& 358.64& 1055.34& 2256.81 \\ 
 \hline
\end{tabular}

\caption{Timings for a lattice of rank $6$ and $D = 75$}
\label{table: rank 6 disc 75}
\end{table}
\end{ex}

\begin{ex}\label{ex:rank-6-disc-84}
Consider the genus of integral lattices with discriminant $D = 84$ that contains $\Lat = A_1^2 \oplus A_2 \oplus L_7$, where $L_7$ is the lattice of rank $2$ and discriminant $7$.
We give timings (in seconds) measured to produce $L$-polynomials for $p < 20$ in Table \ref{table: rank 6 disc 84}.
\begin{table}[ht!]
\centering
\begin{tabular}{ |c|c|c|c|c|c|c|c|c| } 
 \hline
 $p$ & 2 & 3 & 5 & 7 & 11 & 13 & 17 & 19  \\ 
 \hline
 $\phi$ & 0.05 & 0.27 & 3.59 & 12.58 & 187.29 & 358.52 & 1489.43 & 2604.32 \\ 
 \hline
\end{tabular}

\caption{Timings for a lattice of rank $6$ and $D = 84$}
\label{table: rank 6 disc 84}
\end{table}
\end{ex}

\begin{ex}\label{ex:rank-6-disc-131}
Consider the genus of integral lattices of rank $6$ with discriminant $D = 131$.
We give timings (in seconds) measured to produce $L$-polynomials for $p < 20$ in Table \ref{table: rank 6 disc 131}.
\begin{table}[ht!]
\centering
\begin{tabular}{ |c|c|c|c|c|c|c|c|c| } 
 \hline
 $p$ & 2 & 3 & 5 & 7 & 11 & 13 & 17 & 19  \\ 
 \hline
 $\phi$ & 0.11 & 0.54 & 4.08 & 18.52 & 202.14 & 488.63 & 1323.29 & 2284.46 \\ 
 \hline
\end{tabular}

\caption{Timings for a lattice of rank $6$ and $D = 131$}
\label{table: rank 6 disc 131}
\end{table}
\end{ex}

\begin{ex}\label{ex:rank-8-disc-21}
We consider the genus of lattices of rank $8$ and $D=21$ containing $\Lambda = A_6 \oplus A_2$.  It takes 14 seconds to compute the space (dimension $3$) and $0.23$ seconds to compute eigenforms,
and the following much shorter Table \ref{table: rank 8 disc 21} shows how long it takes to compute $L$-polynomials.
\begin{table}[ht!]
\centering
\begin{tabular}{ |c|c|c|c| } 
 \hline
 $p$ & 2 & 3 & 5 \\ 
 \hline
 $\phi$ & 1.25 & 62.8 & 93955.09 \\ 
 \hline
\end{tabular}
\caption{Timings for a lattice of rank $8$ and $D = 21$}
\label{table: rank 8 disc 21}
\end{table}
\end{ex}

\begin{ex}\label{ex:rank-8-disc-53}
We consider the unique genus of lattices of rank $8$ and $D=53$.  It takes 305 seconds to compute the space (dimension $8$) and $1$ second to compute eigenforms,
and the following Table \ref{table: rank 8 disc 53} shows how long it takes to compute $L$-polynomials.
\begin{table}[ht!]
\centering
\begin{tabular}{ |c|c|c|c|c| } 
 \hline
 $p$ & 2 & 3 & 5 & 7 \\ 
 \hline
 $\phi$ & 2.43 & 71.62 & 9559.27 & 345324.19 \\ 
 \hline
\end{tabular}
\caption{Timings for a lattice of rank $8$ and $D = 53$}
\label{table: rank 8 disc 53}
\end{table}
\end{ex}

\begin{ex} \label{ex:rank-6-disc-39-weight-2}
We consider the lattice from Example~\ref{ex:rank-6-disc-39}, but consider forms of weight $(2,0,0)$.
It takes us less than a second to find that the dimension of the space is $4$, and that it consists of two Galois orbits of eigenforms, of sizes $1,3$.
The following table shows how long it takes to compute $L$-polynomials for either eigenform:
\begin{table}[ht!]
\centering
\begin{tabular}{ |c|c|c|c|c| } 
 \hline
 $p$ & 2 & 3 & 5 & 7 \\ 
 \hline
 $\phi$ & 0.66 & 0.98  & 19.81 & 37.7 \\ 
 \hline
\end{tabular}
\label{table: rank 6 disc 39 weight (2,0,0)}
\caption{Timings for a lattice of rank $6$, $D = 39$ and weight $(2,0,0)$}
\end{table}
\end{ex}


\begin{ex} \label{ex:rank-6-disc-7}
We consider the root lattice $A_6$, of rank $6$ and discriminant $7$. In trivial weight it only admits an Eisenstein series, but in weight $(4,0,0)$ we find a cusp form $\phi$ in 10 seconds.
The following table shows how long it takes to compute $L$-polynomials.
\begin{table}[ht!]
\centering
\begin{tabular}{ |c|c|c|c|c| } 
 \hline
 $p$ & 2 & 3 & 5 & 7 \\ 
 \hline
 $\phi$ & 181.56 & 468.45 & 4632.85 & 10253.26 \\ 
 \hline
\end{tabular}
\label{table: rank 6 disc 7 weight (4,0,0)}
\caption{Timings for a lattice of rank $6$, $D = 7$ and weight $(4,0,0)$}
\end{table}
\end{ex}



\begin{ex} \label{ex:A1011}
We consider the root lattice $A_{10}$ of rank $10$ and discriminant $11$. 
We find that the genus consists of $3$ lattices, giving $3$ distinct eigenforms, $\phi_1, \phi_2, \phi_3$. We can compute the polynomials $L_2(\phi_i,T)$ for $i=1,2,3$ in $249.52$ seconds.
\end{ex}


\begin{ex} \label{ex:rank-10-disc-27}
We consider the genus of lattices of rank 10 and discriminant 27 that contains $E_6 \oplus A_2^2$.
We find that the genus consists of $2$ lattices, giving a single cusp form, $\phi$. We can compute the polynomial $L_2(\phi,T)$ in $264.51$ seconds.
\end{ex}

\section{Rank four}\label{sec:rank-four}

In this section, we consider spaces $M(\Lat)$ where $\Lat$ has rank $n = 4$.  In this case, we relate orthogonal modular eigenforms explicitly to Hilbert modular forms, and we give examples.



\subsection*{Transfer}

Let $\Lambda$ be a lattice of rank $4$, as in section \ref{sec:setup}.  In this section, we suppose that $\Lambda$ is maximal, to simplify the discussion of newforms and oldforms.  Write its discriminant as $D = D_0 N^2$ where $D_0$ is a fundamental discriminant. The orthogonal modular forms for $\Lambda$ will be described as Hilbert modular forms over the \'etale algebra 
\begin{equation}
K \colonequals \Q[\sqrt{D_0}] = \Q[x]/(x^2-D_0)
\end{equation}
So if $D_0=1$ we have $K \simeq \Q \times \Q$ and will again find classical modular forms, otherwise we have a real quadratic field. Let $\Z_K$ be the ring of integers of $K$, with $\Z_K=\Z \times \Z$ if $D_0=1$.

To further focus on a clarifying case, we explain the precise relation in the case where $N$ is squarefree.  We say that a prime $p$ is \defi{isotropic} for $V$ if there exists nonzero $x \in V \otimes \Q_p$ such that $Q(x) = 0$; else, we say that $p$ is anisotropic.  There are finitely many anisotropic primes, and we let $M$ be their product. 


For an integer $N$, write $S_2(N \Z_K)$ for 
the space of Hilbert cusp forms of parallel weight $2$, level $N \Z_K$, and trivial character.  This finite-dimensional $\C$-vector space comes equipped with a Hecke algebra $\Hecke(N \Z_K)$ of operators away from $N$ as well as a cavalcade of additional structures, as follows.
\begin{enumerate}
\item The space $S_2(N\Z_K)$ decomposes into new and old subspaces; we let $S_2(N\Z_K)^{M\textup{-new}}$ be the space of forms which are new at all primes $\frakp \mid M$.  
\item For every $p \mid N$, there exists an involution $W_p$ on this space, called the \defi{Atkin--Lehner involution} at $p$.  (When $p$ splits, this is the product of the involutions for the two primes above $p$.)
For a sequence $\{c_{p}\}_{p \mid N}$ with $c_p \in \{ \pm 1\}$, we write $S_2(N \Z_K; \{ c_{p} \}_{p \mid N})$ for the subspace of forms $f \in S_2(N \Z_K)$ such that $W_p f = c_p f$. 
\item The Galois group $G_K \colonequals \Gal(K\,|\,\Q) = \langle \sigma \rangle$ acts on $S_2(N\Z_K)$ via its action on the base field: in terms of Hecke eigenvalues, we have $a_{\frakp}(\sigma f) = a_{\sigma(\frakp)}(f)$.  
\item There is a twisting action by the group of finite order Hecke characters of modulus $N\Z_K (\infty)$.  We denote by $X=X(N)$ the Hecke characters that act on $S_2(N \Z_K; \{ c_p \}_p)$: 
\begin{equation}
X \colonequals \{ \chi : \Cl^{+}(N \Z_K) \to \C^{\times} : \chi^2 = 1, \ \text{$\chi(p) = 1$ for all $p \mid N$} \}.
\end{equation}
\end{enumerate}

Putting these altogether, we write 
\begin{equation}
G_K \backslash S_2(N \Z_K; \{ c_{p} \}_p)^{X, M \text{-new}}
\end{equation}
for the subspace of forms which are $M$-new at all primes $\frakp \mid M$ and fixed by all characters in $X$, up to the swapping action of $G_K$.  

The following transfer of modular forms can be proven using the even Clifford functor. 

\begin{thm}[\cite{future2}] \label{thm: Clifford rank four}
There is an injective linear map from orthogonal cusp forms to orbits of Hilbert cusp forms
\[
C_0 \colon S(\Lat) \hookrightarrow G_K \backslash S_2(N \Z_K)
\]
and a natural embedding $\Hecke(\Lat) \hookrightarrow \Hecke(N \Z_K)$ for which this injection is equivariant for the action of the corresponding Hecke algebras.  The image of this map consists of the orbits in $S_2(N\mathbb{Z}_K;\{c_p\}_{p\mid N})^{X, M\text{-new}}$, where $c_p = -1,1$ according as $p \mid M$ or not. 
\end{thm}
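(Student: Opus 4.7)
The plan is to use the exceptional isomorphism coming from the even Clifford functor applied to a quaternary quadratic space. For such $(V,Q)$ over $\Q$, the even Clifford algebra $C_0(V,Q)$ is a quaternion algebra $B$ over the discriminant field $K = \Q[\sqrt{D_0}]$, and $C_0$ sends the maximal lattice $\Lat$ to a $\Z_K$-order $\Order = C_0(\Lat) \subset B$. On the level of algebraic groups this is the well-known exceptional isomorphism identifying $\mathrm{GSpin}(V)$ with a subgroup of $B^{\times}$ of reduced norm lying in $\Q^{\times} \subset K^{\times}$, and under this isomorphism $\PGSO(V)$ matches $B^{\times}/K^{\times}$, with the natural involution of $K$ realizing the $G_K$-action.

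First, I would rewrite the orthogonal modular forms adelically: $M(\Lat)$ is the space of $\C$-valued functions on the double coset space $\OO(V)(\Q)\backslash \OO(V)(\Af)/\OLhat$, and the cuspidal subspace is the complement of the constant functions. Transporting via $C_0$, the double coset space matches the corresponding one for $B^{\times}/K^{\times}$ with level structure $\widehat{\Order}^{\times}$. Since $B$ is definite at infinity (as $Q$ is positive definite), the quaternionic Jacquet--Langlands correspondence then produces a Hecke-equivariant injection from this space (after removing the Eisenstein piece) into the space of Hilbert cusp forms on $\mathrm{GL}_2/K$ of parallel weight $2$ and the right tame level.

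Next, I would identify the precise level, Atkin--Lehner constraints, characters, and $G_K$-action. The quaternion algebra $B$ is ramified exactly at the anisotropic primes $p\mid M$ (plus the infinite places); Jacquet--Langlands then forces the image to consist of forms that are new at every $\frakp\mid M$ and have Atkin--Lehner eigenvalue $c_p = -1$ there, while at the remaining $p\mid N$ (where $B$ is split) maximality of $\Lat$ makes $\widehat{\Order}$ an Eichler order of matching local level, and local matching of the Hecke algebras pins down $W_p f = +f$. The outer element of $\OO(V)/\SO(V)$ acts on the Clifford side by the nontrivial element of $G_K$, explaining the quotient by $G_K$ in the target. Finally, the center of $B^{\times}$ modulo $K^{\times}$ contributes central characters trivial on all primes dividing $N$: cutting out the fixed subspace for these characters yields precisely the fixed subspace for the group $X(N)$ of quadratic narrow ray class characters of modulus $N\Z_K(\infty)$.

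The main obstacle will be the local computation at bad primes: translating the constraint that $\Lat_p$ be maximal with prescribed isotropy behavior into the correct local Eichler level and local central-character constraint requires a careful case-by-case analysis at the split, inert, and ramified primes of $K$, and in particular verifying that the two possible local quaternion orders produce the Atkin--Lehner sign $c_p$ stated in the theorem. A secondary obstacle is proving surjectivity onto the prescribed image rather than mere injectivity: this amounts to showing that every Galois orbit of Hilbert eigenforms satisfying the stated new/Atkin--Lehner/character conditions lies in the image of $C_0$, which should follow from the explicit form of Jacquet--Langlands in the definite quaternionic setting together with strong approximation for $\mathrm{GSpin}$, but the bookkeeping of which old-and-new subspaces contribute is delicate and is presumably where most of the work in the companion paper \cite{future2} is spent.
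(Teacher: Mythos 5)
Your outline matches the route the paper itself indicates: Theorem \ref{thm: Clifford rank four} is stated without proof in this paper, being attributed to the companion work \cite{future2}, with the only hints given being that it ``can be proven using the even Clifford functor,'' that the square-discriminant case is due to B\"ocherer--Schulze-Pillot \cite{BSP}, and that the dimension count follows from Ponomarev \cite{Ponomarev}. Your chain---even Clifford functor producing a quaternion algebra over $K=\Q[\sqrt{D_0}]$ ramified at the anisotropic primes, the definite Jacquet--Langlands/Eichler--Shimizu transfer to Hilbert forms of parallel weight $2$, the determinant $-1$ coset accounting for the $G_K$-quotient, and the spinor norm accounting for the character group $X$---is exactly this approach, and the obstacles you identify (the local order/Atkin--Lehner analysis at primes dividing $N$ and $M$, and surjectivity onto the stated image) are precisely where the deferred work in \cite{future2} lies.
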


In the square discriminant case, i.e., $D_0 = 1$, this was proved by B{\"o}cherer and Schulze-Pillot in \cite{BSP}. An equality of dimensions can be deduced from the results of Ponomarev in \cite{Ponomarev}. 

As a corollary from this description, we obtain a relationship between the $L$-polynomials of the objects on both sides. In order to describe this relation we recall the definition of the Asai $L$-function associated to a Hilbert eigenform. Let $f \in S_2(N \Z_K)$ be an eigenform. For every prime $\frakp$ of $\Z_K$ that does not divide $N$ with $T_{\frakp} f = a_{\frakp} f$, we factor 
\begin{equation}
1 - a_{\frakp} T + \Nm(\frakp) T^2 = (1 - \alpha_{\frakp}T)(1 - \beta_{\frakp}T)
\end{equation}
where $\Nm(\frakp)$ is the absolute norm. Asai \cite{Asai} defines for every prime $p \nmid N$ a polynomial depending on the splitting behavior of $p$ in $K$:
\begin{equation} \label{eq: Asai L function}
L_p(f, T, \Asai) \colonequals \begin{cases} 
(1 - \alpha_{\frakp} \alpha_{\frakp'}T)
(1 - \alpha_{\frakp} \beta_{\frakp'}T)
(1 - \beta_{\frakp} \alpha_{\frakp'}T)
(1 - \beta_{\frakp} \beta_{\frakp'}T), & \text{ if $p \Z_K = \frakp \frakp'$}; \\
(1 - \alpha_{\frakp} T) (1 - \beta_{\frakp} T) (1 - p^2 T^2), & \text{ if $p \Z_K = \frakp$}.
\end{cases}
\end{equation}
These are the ``good" $L$-polynomials of the Asai lift of $f$ to $\text{GL}_4$.
The precise description of the embedding of Hecke algebras $\Hecke(\Lat) \hookrightarrow \Hecke(N \Z_K)$ in Theorem~\ref{thm: Clifford rank four} then yields the following corollary.
\begin{cor} \label{cor: Asai L function}
Let $\phi \in S(\Lat)$ be an eigenform. Then for every prime $p \nmid D$ we have
\begin{equation}
L_p(\phi, T) = L_p( C_0(\phi), T, \Asai).   
\end{equation}
\end{cor}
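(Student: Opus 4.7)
The plan is to derive the equality of $L$-polynomials from Theorem~\ref{thm: Clifford rank four} by making explicit the Hecke algebra embedding $\iota \colon \Hecke(\Lat) \hookrightarrow \Hecke(N\Z_K)$ at each prime $p \nmid D$. By Hecke equivariance, if $T_{p,k}\phi = \lambda_{p,k}\phi$ and $f = C_0(\phi)$, then $\iota(T_{p,k})$ acts on $f$ with eigenvalue $\lambda_{p,k}$. Since $L_p(\phi,T)$ is determined by the pair $(\lambda_{p,1},\lambda_{p,2})$ via the Satake transform formulas recalled in Section~\ref{sec:setup}, the corollary reduces to expressing both polynomials in terms of the Hilbert Hecke eigenvalues $a_\frakp(f)$ and matching coefficients.

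For $p$ split in $K$, say $p\Z_K = \frakp\frakp'$, the dual group of $\SO_4$ is isogenous to $\SL_2(\C) \times \SL_2(\C)$, which at a split prime also describes the dual of $\mathrm{Res}_{K/\Q}(\GL_2)$; the transfer arising from the even Clifford functor underlying Theorem~\ref{thm: Clifford rank four} is precisely the tensor product map $\SL_2(\C) \times \SL_2(\C) \to \SO_4(\C)$. Consequently the Satake parameters of $\phi$ at $p$ in the standard representation are the four pairwise products $\{\alpha_\frakp^i \beta_\frakp^{1-i} \alpha_{\frakp'}^j \beta_{\frakp'}^{1-j} : i,j \in \{0,1\}\}$, so
\begin{equation*}
L_p(\phi, T) = \prod_{i,j \in \{0,1\}} \bigl(1 - \alpha_\frakp^i \beta_\frakp^{1-i} \alpha_{\frakp'}^j \beta_{\frakp'}^{1-j} T\bigr)
\end{equation*}
coincides term-by-term with the split case of $L_p(f, T, \Asai)$. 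As a sanity check, the coefficient of $T$ gives $\lambda_{p,1} = (\alpha_\frakp+\beta_\frakp)(\alpha_{\frakp'}+\beta_{\frakp'}) = a_\frakp a_{\frakp'}$, and matching the middle coefficient $p(\lambda_{p,2}+2)$ yields $\lambda_{p,2} = a_\frakp^2 + a_{\frakp'}^2 - 2(p+1)$ after using $\alpha_\frakp \beta_\frakp = \alpha_{\frakp'}\beta_{\frakp'} = p$.

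For $p$ inert, with $p\Z_K = \frakp$ and $\Nm(\frakp) = p^2$, the same dual-group picture (now with the nontrivial Frobenius of $K$ permuting the two $\SL_2(\C)$ factors) produces the Satake parameters $\{\alpha_\frakp, \beta_\frakp, p, -p\}$ for $\phi$, whence
\begin{equation*}
L_p(\phi, T) = (1-\alpha_\frakp T)(1-\beta_\frakp T)(1-pT)(1+pT) = (1 - a_\frakp T + p^2 T^2)(1 - p^2 T^2),
\end{equation*}
matching the inert case of $L_p(f, T, \Asai)$ and giving $\lambda_{p,1} = a_\frakp$. The main obstacle in this plan is the identification of Satake parameters on both sides at each prime; once the tensor-product (respectively, induction) description of the Clifford transfer on dual groups is granted, the corollary becomes a direct polynomial identity as displayed above.
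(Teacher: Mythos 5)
Your proposal is correct and follows essentially the same route as the paper, which derives the corollary directly from the description of the Hecke algebra embedding $\Hecke(\Lat) \hookrightarrow \Hecke(N\Z_K)$ in Theorem~\ref{thm: Clifford rank four} (whose precise form is deferred to \cite{future2}); your identification of the Clifford transfer on Satake parameters as the tensor-product map $\SL_2(\C)\times\SL_2(\C)\to\SO_4(\C)$ (respectively its twisted form at inert primes) is exactly that deferred input, and your polynomial verifications in the split and inert cases, including $\lambda_{p,1}=a_\frakp a_{\frakp'}$ and $\lambda_{p,2}=a_\frakp^2+a_{\frakp'}^2-2(p+1)$, check out against the rank-four $L$-polynomial formulas of Section~\ref{sec:setup}.
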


\begin{rem}\label{rmk sq}
In the square discriminant case, i.e. $D_0=1$, we have $K \simeq F \times F$ and $S_2(N \Z_K) \subseteq M_2(N) \otimes M_2(N)$ is the subspace spanned by pairs of (classical) modular forms of level $N$ such that either both are cusp forms, or one form is a cusp form and the other is the Eisenstein series $E_2$. This space was named the ``essential" subspace in \cite{BSP}. In this case, all the primes are split, and so the Asai $L$-function in \eqref{eq: Asai L function} turns out to be simply the Rankin--Selberg $L$-function associated to $f \otimes g \in S_2(N \Z_K)$. Namely, if $C_0(\phi) = f \otimes g$, then $L_p(\phi, T) = L_p(f \otimes g, T)$. 
We further note that the Galois action here is simply the swap, identifying $f \otimes g$ with $g \otimes f$.
\end{rem}

\begin{rem} \label{rmk:allspinor}
In view of the description of the image of the map $C_0$ in Theorem~\ref{thm: Clifford rank four}, one might wonder where all the other forms went.  Indeed, working with a compact form we only expect to see Hecke characters whose associated Dirichlet character is trivial, and since the Clifford functor is trivial on scalars, we must also restrict to forms with trivial Hecke character. However, it is possible to obtain the spaces of forms with different Atkin--Lehner eigenvalues by using appropriate weights.
For $d \mid N$, we let $\nu_d : \Q^{\times}_{>0} / \Q^{\times 2} \to \{\pm 1 \}$ be the character defined on primes by $\nu_d(p) = -1$ iff $p \mid d$. 
Let $\gamma_0 \in O(V)$ be an isometry with determinant $-1$. Elements in $\SO(V)$ can be represented as composition of reflections by vectors, and the product of the norms of these vectors is invariant up to squares, yielding a map called the spinor norm, $\nrd : \SO(V) \to \Q^{\times}_{>0} / \Q^{\times 2}$, which we extend to $\OO(V)$ by setting $\nrd(\gamma_0) = 1$. 
Then $\psi_d = \nu_d \circ \nrd$ is a character of $\OO(V)$, known as the \defi{spinor norm} character, as in Hein--Tornar{\'\i}a--Voight \cite{HeinTornariaVoight}. One can find the forms with other Atkin--Lehner eigenvalues by considering the space of orthogonal modular forms with weight given by the spinor norm character. Full details and more general statements will be given in future work \cite{future2}. 
\end{rem}


\subsection*{Square discriminant case}

We now proceed to give examples that exhaust all possible types of eigensystem and $L$-function in the rank $4$ case.  Throughout $p$ is assumed to be a good prime, i.e. $p\nmid D$.  We begin with the case where $D_0=1$.  By Remark \ref{rmk sq} the eigenforms $\phi \in M(\Lat)$ can only belong to one of four types.

\begin{ex}
Let $\Lat$ be a maximal integral lattice with $D = 37^2$, as in Example \ref{ex:rank-4-disc-1369}. 
As expected, the eigenform $\phi_1$ is Eisenstein, having eigenvalue \[\lambda_{p,1} = (1+p+p^2) + p = (1+p)^2\] and $L$-polynomials \[L_p(\phi_1,T) = (1-T)(1-pT)^2(1-p^2T)\] (the $L$-polynomial of the ``Asai $L$-function" of $E_2\otimes E_2$).

To explain the eigenforms $\phi_2$ and $\phi_3$ we let
\begin{equation}
\begin{aligned}
f_2 &\colonequals q - 2q^2 - 3q^3  + 2q^4 - 2q^5 + 6q^6 - q^7 + O(q^9) \in S_2(\Gamma^{(1)}_0(37))^{+} \\
f_3 &\colonequals q + q^3 - 2q^4 - q^7 - 2q^9 + O(q^{11}) \in S_2(\Gamma^{(1)}_0(37))^{-}
\end{aligned}
\end{equation}
be the forms with LMFDB labels \modform{37}{2}{a}{a} and \modform{37}{2}{a}{b}.
In both cases it appears that $\lambda_{p,1} = a_p^2$, where $a_p$ is the $T_p$ eigenvalue of $f_2$ and $f_3$ respectively. Indeed, both are explained in the same fashion by using the transfer map. For example, one can check that $C_0(\phi_2) = f_2 \otimes f_2$, so that \[L_p(\phi_2,T) = L_p(C_0(\phi_2),T,\text{Asai}) = L_p(f_2\otimes f_2,T).\] Comparing linear terms gives $\lambda_{p,1} = a_p^2$.

It remains to explain the eigenform $\phi_4$. It appears that $\lambda_{p,1} = (1+p)a_p$, where $a_p$ is the $T_p$ eigenvalue of $f_3$. This is again explained by the transfer map, since one can check that $C_0(\phi_4) = E_2\otimes f_3$, so that \[L_p(\phi_4,T) = L_p(C_0(\phi_4),T,\text{Asai}) = L_p(E_2\otimes f_3,T).\] Comparing linear terms gives $\lambda_{p,1} = (1+p)a_p$.

Note that $W_{37} f_2 = -f_2$ while $W_{37} E_2 = E_2 $ and $W_{37} f_3 = f_3$, and indeed we only obtain the pairs which are fixed by $W_{37}$ coming from pairs having the same Atkin--Lehner sign (see Remark \ref{rmk:allspinor}).
\end{ex}

\begin{ex}
Let $\Lat$ be a maximal integral lattice with $D = 67^2$ and Gram matrix  \[\left(\begin{array}{cccc}2&0&0&1\\ 0&2&1&0\\ 0&1&34&0\\ 1&0&0&34\end{array}\right).\] Then $\#\Cls(\Lat)= 13$ and we compute the Hecke operator $T_{2,1}$.

Consider the eigenvector $\phi$ satisfying $T_{2,1}\phi = -\phi$. The first few eigenvalues of $\phi$ are: \[\lambda_{2,1} = \lambda_{3,1} = \lambda_{5,1} = \lambda_{7,1} = \lambda_{13} = -1,\quad \lambda_{11,1} = 1,\quad \lambda_{17,1} = 4,\quad \lambda_{19,1} = 29, \quad \dots\] It seems that $\lambda_{p,1} = a_pb_p$, where $a_p$ and $b_p$ are the eigenvalues of the eigenform with LMFDB label \modform{67}{2}{a}{c}: \[ f_1 \colonequals q - \alpha q^2 + (1-\alpha)q^3 (-1+\alpha)q^4 + (1+2\alpha)q^5 + q^6 + \alpha q^7 + O(q^8) \in S_2(\Gamma_0^{(1)}(67)),\] and its Galois conjugate $f_2$, where $\alpha \colonequals (1+\sqrt{5})/2$. This is explained by the transfer map, since $C_0(\phi) = f_1\otimes f_2$, and so \[L_p(\phi,T) = L_p(C_0(\phi), T,\text{Asai}) = L_p(f_1\otimes f_2, T).\] Comparing linear terms gives $\lambda_{p,1} = a_pb_p$.

Note here the necessity of the Galois action appearing in Theorem \ref{thm: Clifford rank four}. We needed to identify $f_1\otimes f_2$ and $f_2\otimes f_1$ in order to uniquely determine $C_0(\phi)$ (similarly with $E_2\otimes f_3$ with $f_3\otimes E_2$). This is clear from an $L$-functions perspective since $L(f_1\otimes f_2, s) = L(f_2\otimes f_1,s)$ and $L(E_2\otimes f_3,s) = L(f_3\otimes E_2, s)$.
\end{ex}

\subsection*{Non-square discriminant}\label{subsub:nsd}

We now consider the somewhat less well-studied nonsquare discriminant case. Here an eigenform $\phi\in M(\Lat)$ can be one of three types.

\begin{ex}
Let $\Lat$ be a maximal integral lattice with $D = 193$, as in Example \ref{ex:rank-4-disc-193}.

The eigenforms $\phi_1,\phi_2,...,\phi_9$ come in three Galois orbits. The eigenvector $\phi_1$ is the Eisenstein eigenvector with eigenvalues \[\lambda_{p,1} = \frac{p^3-1}{p-1} + \chi_{193}(p)p\] and $L$-polynomials \[L_p(\phi_1,T) = (1-\chi_{193}(p)pT)(1-T)(1-pT)(1-p^2T)\] (those of the ``Asai $L$-function" of the Hilbert Eisenstein series $E_2\in M_2(\Z_K)$ over $K = \mathbb{Q}(\sqrt{193})$). 

The seven eigenvectors $\phi_2,...,\phi_8$ appear to have eigenvalues $\lambda_{p,1} = a_{p,i}^2 + p(1-\chi_{193}(p))$ with $a_{p,i}$ running through the $T_p$ eigenvalues of $f \in S_2(\Gamma^{(1)}_0(193),\chi_{193})$ (a Galois orbit of size $14$ with LMFDB label \modform{193}{2}{b}{a}). This is explained by the transfer map. Indeed, we find that $C_0(\phi_2) = \DN(f)$, the Doi--Naganuma lift  \cite{DN}, and so we have
\begin{equation}
\begin{aligned}
L_p(\phi_2,T)(1-\chi_{193}(p)pT) &= L_p(\DN(f), T, \Asai)(1-\chi_{193}(p)pT) \\
&= L_p(f \otimes \bar{f} \otimes \chi_{193}, T)(1-pT).
\end{aligned}
\end{equation}
Comparing linear terms yields $\lambda_{p,1} + \chi_{193}(p) p = a_{p,i}^2 + p$. Note that the Galois action identifies pairs of forms in the orbit, shrinking its size from $14$ to $7$. 

The eigenvector $\phi_9$ is slightly more mysterious. The first few eigenvalues are \[\lambda_{2,1} = -4,\quad \lambda_{3,1} = -4, \quad \lambda_{5,1} = 1, \quad\dots\] These appear to be linked to a Hilbert eigenform $f\in S_2(\Z_K)$. Indeed, there is such an eigenform (LMFDB label \hmf{2.2.193.1}{1}{1}{a}) with Hecke eigenvalues \[a_{\mathfrak{p}_2} = a_{\mathfrak{p}_3} = \frac{1+\sqrt{17}}{2}, \quad a_{\bar{\mathfrak{p}}_2} = a_{\bar{\mathfrak{p}}_3} = \frac{1-\sqrt{17}}{2}, \quad a_{\mathfrak{p}_5} = 1,\quad \dots\] It seems that: \[\lambda_p = \begin{cases} a_{\mathfrak{p}}a_{\bar{\mathfrak{p}}} & \text{ if } p\mathcal{O}_K = \mathfrak{p}\bar{\mathfrak{p}} \text{ splits in } K\\ a_{\mathfrak{p}} & \text{ if } p\mathcal{O}_K=\mathfrak{p} \text{ is inert in } K\end{cases}. \] This follows from the transfer map, since $C_0(\phi_9) = f$ and the above is exactly the linear term of $L_p(f,T,\text{Asai})$.
\end{ex}

\section{Theta series and a theorem of Rallis}\label{sub:theta-rallis}

In the interest of finding explicit formulae for the eigenvalues $\lambda_{p,k}$, we will find it very useful to consider theta series, defined as follows.

First, given a lattice $\Lambda$ of rank $n$ defining the space $M(\Lambda)$ of orthogonal modular forms, we define the \defi{theta map} for $g \in \Z_{\geq 1}$ by 
\begin{equation} \label{eqn:thetadef}
\begin{aligned}\theta^{(g)} \colon M(\Lat) &\to M_{\frac{n}{2}}(\Gamma_0^{(g)}(D),\chi_{D^*})\\ [c_1,...,c_h] &\mapsto \sum_{i=1}^h \frac{c_i}{\#\OO(\Lat_i)}\theta^{(g)}(\Lat_i),\end{aligned}
\end{equation}
where 
\begin{equation}
\theta^{(g)}(\Lat_i)(\tau) \colonequals \sum_{A\in \Mat{n,g}(\Z)}e^{\pi i \tr(A^\textsf{T}Q_i A \tau)}
\end{equation}
is the Siegel theta series of $\Lat_i$ of genus $g$ (with variable in the Siegel upper half plane $\mathcal{H}_g = \{\tau\in M_g(\C)\,|\,\tau^{T} = \tau, \text{Im}(\tau)>0\}$). Here $Q_i$ is the Gram matrix of $\Lambda_i$ with respect to $Q$ and ${}^{\textsf{T}}$ denotes matrix transpose.  By convention, we also define 
\begin{equation}
\theta^{(0)}(\Lat_i) \colonequals 1
\end{equation}
so that 
\begin{equation}
\theta^{(0)}([c_1,\dots,c_h]) = \sum_{i=1}^h \frac{c_i}{\#\OO(\Lat_i)} = \langle [c_1,\dots,c_h], [1,1,\dots, 1]\rangle
\end{equation}
with inner product as in \eqref{eqn:innerprod}.

A long-standing problem has been to determine relations (and non-relations) between Siegel theta series of lattices. For example the fact that $\theta^{(1)}(E_8\oplus E_8) = \theta^{(1)}(E_{16})$ shows that there exist isospectral tori that are non-isometric. The fact that $\theta^{(4)}(E_8\oplus E_8) - \theta^{(4)}(E_{16}) \ne 0$ 
is related to the famous Schottky problem (this function is non-vanishing precisely when $\tau\in\mathcal{H}_4$ corresponds to a $4$-dimensional abelian variety that is not the Jacobian of a genus $4$ curve).

\begin{defn}\label{def:depth} Let $\phi\in M(\Lat)$ be an eigenform. The \defi{depth} $d_\phi$ is the smallest integer such that $\theta^{(d_\phi)}(\phi) \ne 0$.
\end{defn}

In fact $\theta^{(g)}(\phi) \neq 0$ for all $g\geq d_\phi$, since theta series are compatible under the Siegel operator \[\Phi_g \colon M_k(\Gamma_0^{(g)}(D),\chi_{D^*}) \rightarrow M_k(\Gamma_0^{(g-1)}(D),\chi_{D^*}),\] i.e., $\Phi_g(\theta^{(g)}(\Lat)) = \theta^{(g-1)}(\Lat)$ for any lattice $\Lat$ (see B\"ocherer \cite{Bocherer}).

For $p\nmid D$, results of Rallis relate the action of $p^k$-neighbour operators on eigenforms $\phi\in M(\Lat)$ with the action of Hecke operators at $p$ acting on the Siegel modular form $F = \theta^{(g)}(\phi)$ (if non-zero). This implies precise statements relating the Hecke eigenvalues of $\phi$ and $F$. The following is a consequence of such results that will prove useful later. 

\begin{thm}\label{RallisThm}
Let $\phi\in M(\Lat)$ be an eigenform. Suppose that $g\geq 0$ is such that $F \colonequals \theta^{(g)}(\phi)$ has $F \neq 0$.  Let $m \colonequals n/2-1$.  Then the following statements hold.

\begin{enumalph}
\item $F$ is an eigenform for the algebra of Hecke operators generated by $T_p$ when $\chi_{D^*}(p)=1$ and $T_{1,p^2}$ when $\chi_{D^*}(p)=-1$. 
\item If $2g < n$ and $p\nmid D$ then \[L_p(\phi,T) = L_p\left(\chi_{D^*}\otimes F,\,p^{m}T, \emph{ std}\right)\prod_{i=g-m}^{m-g}\left(1 - p^{m - i}T\right).\]
\item If $2g \geq n$ and $p\nmid D$ then \[L_p\left(\chi_{D^*}\otimes F,\,p^{m}T, \emph{ std}\right) = L_p(\phi,T)\prod_{i=(m+1)-g}^{g - (m+1)}\left(1 - p^{m - i} T\right).\]
\end{enumalph}
\end{thm}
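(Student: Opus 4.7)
The plan is to deduce the theorem from Rallis's analysis of the unramified local theta correspondence for the dual reductive pair $(\OO(V), \Sp_{2g})$. Concretely, $F = \theta^{(g)}(\phi)$ is, up to normalization, the global theta lift of $\phi$ via the Weil representation, and at each prime $p \nmid D$ the local component of $F$ is determined by Howe's unramified theta correspondence applied to the spherical representation generated by $\phi$ at $p$.

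For part (a), at a prime $p \nmid D$ the orthogonal Hecke eigensystem of $\phi$ determines a spherical representation $\pi_p$ of $\OO_n(\Q_p)$; its image under the local theta correspondence is a spherical representation of $\Sp_{2g}(\Q_p)$ provided the global lift does not vanish at this place, which is ensured by the nonvanishing of $F$ together with the Howe duality principle. A spherical representation is automatically an eigensystem for the spherical Hecke algebra, and the fact that when $\chi_{D^*}(p) = -1$ only $T_{1,p^2}$ acts reflects that the orthogonal group at $p$ is then non-split, so its spherical Hecke algebra is strictly smaller than in the split case. Translating this through the Satake transform of Murphy \cite{Murphy}*{\S 3} pins down exactly the operator $T_{1,p^2}$ on the Siegel side.

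For parts (b) and (c), the key input is Rallis's explicit formula for Satake parameters under the theta correspondence. Write the orthogonal Satake parameters at $p$ as $\{\alpha_1^{\pm 1}, \dots, \alpha_m^{\pm 1}\}$ together with $\chi_{D^*}(p)$, so that
\begin{equation}
L_p(\phi, T) = (1 - \chi_{D^*}(p) T) \prod_{i=1}^{m} (1 - \alpha_i T)(1 - \alpha_i^{-1} T).
\end{equation}
The Satake parameters of $F$ are then determined: in the stable range $2g < n$ they come from $g$ of the pairs $\{\alpha_i^{\pm 1}\}$, rescaled so that the substitution $T \mapsto p^m T$ and the twist by $\chi_{D^*}$ absorb the normalization, while the remaining pairs together with the sign contribute the factor $\prod_{i=g-m}^{m-g}(1 - p^{m-i} T)$ of degree $2(m-g)+1 = n - (2g+1)$. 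In the anti-stable range $2g \geq n$, the $n$ orthogonal parameters all appear among the $2g+1$ standard symplectic parameters, and the additional $2g - n + 1$ factors $\prod_{i=(m+1)-g}^{g-(m+1)}(1 - p^{m-i} T)$ record the ``excess'' on the symplectic side. In both ranges the degree count matches up, and the stated identities follow.

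The main obstacle is a careful matching of normalizations: the $L$-polynomial $L_p(\phi, T)$ is defined through the Satake transform following Murphy \cite{Murphy}*{\S 3}, whereas Rallis's formulas for the unramified theta lift are tied to the doubling method and a specific Weil-representation normalization, and reconciling the two is precisely what forces the substitution $T \mapsto p^m T$ and the twist by $\chi_{D^*}$ in the statement. A secondary issue is to verify that when the global lift $F$ is nonzero, the local lift is nonzero at every $p \nmid D$---i.e., that $\pi_p$ lies in the image of the unramified local theta correspondence---which is automatic in the stable range but demands extra attention when $2g \geq n$.
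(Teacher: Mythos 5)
Your approach is essentially the same as the paper's: the proof given there is a citation to Rallis (Remark 4.4 and Theorem 6.1 of \cite{Rallis}) for parts (b) and (c), and to Freitag's Eichler commutation relation (as translated by Chenevier--Lannes in the unimodular case) for part (a) --- that is, precisely the unramified theta-correspondence and Satake-parameter matching you outline, with the same two technical points (normalization matching and nonvanishing of the local lifts) delegated to the cited sources. One concrete slip worth fixing: your displayed formula for $L_p(\phi,T)$ has degree $2m+1=n-1$, but the standard $L$-polynomial of $\phi$ has degree $n$; in your normalization the unramified parameters of $\OO_n$ at $p$ are $\{1,\chi_{D^*}(p)\}\cup\{\alpha_i^{\pm 1}\}_{i=1}^{m}$, so a factor $(1-T)$ is missing from the display. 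Your later bookkeeping --- $(2g+1)$ parameters for the standard $L$-function of $F$ plus $2(m-g)+1$ for the complementary product, totalling $n$ --- is internally inconsistent with that degree-$(n-1)$ starting point; restoring the missing factor makes the counts agree and matches the Eisenstein case $L_p(\phi_1,T)=(1-\chi_{D^*}(p)p^{m}T)\prod_{i=0}^{n-2}(1-p^iT)$ after rescaling by $p^m$.
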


Here the standard $L$-function of $\chi\otimes F$ for eigenform $F\in S_k(\Gamma_0^{(g)}(D),\chi)$ has $L$-polynomials \[L_p(\chi\otimes F,\,T,\text{ std}) = (1-\chi(p)T)\prod_{i=1}^g(1-\chi(p)\alpha_i T)(1-\chi(p) \alpha_i^{-1} T),\] at $p\nmid D$, where $\{\alpha_{0,p}, \alpha_{1,p}, ..., \alpha_{g,p}\}$ are the (standard) Satake parameters of $F$ at $p$, normalized so that $\alpha_{0,p}^2\alpha_{1,p}\dots\alpha_{g,p} = 1$. See Pitale \cite{Pitale}*{Chapter 3} for a more detailed discussion. 

\begin{proof}
Parts (b) and (c) follow from work of Rallis \cite{Rallis}*{Remark 4.4}. Part (a) uses an additional Eichler commutation relation from the work of Freitag \cite{Freitag}*{Theorem 4.5} (see also Chenevier--Lannes \cite{cl}*{p.~178, (ii); (7.1.1)}.  The translation in the unimodular case is given explicitly by Chenevier--Lannes \cite{cl}*{Corollary 7.1.3}, but the argument applies more generally, by carefully 
following arrows \cite{Rallis}*{Theorem 6.1}. 
\end{proof}


Note that in the above theorem $F$ may be a lift and so the standard $L$-function may decompose further into $L$-functions corresponding to eigenforms of lower genus. The rank $4$ examples in the previous section already demonstrate this behaviour.

In fact, a consequence of general conjectures of Arthur (known in this case by work of Ta\"ibi \cite{Taibi}) is that the global $L$-function $L(\phi,s)$ should always decompose into a product of automorphic $L$-functions for general linear groups. Knowing this decomposition is related to understanding how $\phi$ is an endoscopic lift, and it lets us understand exactly how the $\lambda_{p,k}$ can be rewritten in terms of eigenvalues of automorphic forms of lower rank groups.

\subsection*{Small depth}\label{subsec:small-depth}

Theorem \ref{RallisThm} tells us that the underlying structure of the eigenvalues $\lambda_{p,k}$ of an eigenform $\phi\in M(\Lat)$ is intimately related to its depth $d_\phi$. 
We begin with small depths; in this case, general formulae can be proved.

\begin{thm}\label{depth01}
Let $\phi\in M(\Lat)$ be an eigenform and let $p\nmid D$ be prime.  Then the following statements hold.
\begin{enumalph}
\item{$d_\phi = 0$ if and only if $\phi$ is the Eisenstein eigenform. In this case: \[L_p(\phi,T) = \left(1-\chi_{D^*}(p)p^{\frac{n}{2}-1}T\right)\prod_{i=0}^{n-2}(1-p^iT)\] and so \[\lambda_{p,1} = \left(\frac{p^{n-1}-1}{p-1}\right) + \chi_{D^*}(p)p^{\frac{n}{2}-1}.\]}
\item{If $d_\phi = 1$ and $F = \theta^{(1)}(\phi)\in S_{\frac{n}{2}}(\Gamma_0^{(1)}(D),\chi_{D^*})$ then: \[L_p(\phi,T) = L_p(\chi_{D^*}\otimes \emph{Sym}^2(F),T)\prod_{i=1}^{n-3}(1-p^iT)\] and so \[\lambda_{p,1} = a_p^2 - \chi_{D^*}(p)p^{\frac{n}{2}-1} + p\left(\frac{p^{n-3}-1}{p-1}\right),\] where $a_p$ is the $T_p$ eigenvalue of $F$.}
\end{enumalph}
\end{thm}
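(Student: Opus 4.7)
The plan for both parts is to invoke Theorem~\ref{RallisThm}(b) with $g=d_\phi\in\{0,1\}$ and read off the coefficient of $T$ in the resulting factorization of $L_p(\phi,T)$; recall from the end of \S\ref{sec:setup} that this coefficient equals $-\lambda_{p,1}$.

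For part (a), I first identify depth-zero eigenforms with the Eisenstein eigenform. By the convention $\theta^{(0)}(\Lambda_i)=1$ and the inner product \eqref{eqn:innerprod}, one has $\theta^{(0)}(\phi)=\langle\phi,[1,\ldots,1]\rangle$; since the $T_{p,k}$ are self-adjoint, distinct Hecke eigenforms are orthogonal, so $\theta^{(0)}(\phi)\neq 0$ exactly when $\phi$ is a nonzero scalar multiple of the Eisenstein eigenform. Next I apply Theorem~\ref{RallisThm}(b) with $g=0$ and $F=\theta^{(0)}(\phi)$ a nonzero constant: the twisted standard $L$-polynomial of a genus-zero constant is just $1-\chi_{D^*}(p)T$, and with $m=n/2-1$ the product $\prod_{i=-m}^{m}(1-p^{m-i}T)$ re-indexes via $j=m-i$ to $\prod_{j=0}^{n-2}(1-p^{j}T)$. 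Extracting the coefficient of $T$ then yields $\lambda_{p,1}=\chi_{D^*}(p)p^{n/2-1}+\sum_{j=0}^{n-2}p^{j}$, as claimed.

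For part (b), assume $d_\phi=1$, so that $F=\theta^{(1)}(\phi)$ is a nonzero classical modular form. Siegel-operator compatibility gives $\Phi_1(F)=\theta^{(0)}(\phi)=0$, so $F\in S_{n/2}(\Gamma_0^{(1)}(D),\chi_{D^*})$ is cuspidal, and by Theorem~\ref{RallisThm}(a) it is an eigenform at primes $p\nmid D$. Theorem~\ref{RallisThm}(b) with $g=1$ gives $L_p(\phi,T)=L_p(\chi_{D^*}\otimes F,p^{m}T,\text{std})\prod_{i=1-m}^{m-1}(1-p^{m-i}T)$; the product re-indexes via $j=m-i$ to $\prod_{j=1}^{n-3}(1-p^{j}T)$. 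I then identify $L_p(\chi_{D^*}\otimes F,p^{m}T,\text{std})$ with $L_p(\chi_{D^*}\otimes\Sym^2(F),T)$: writing the Hecke polynomial of $F$ at $p$ as $(1-\alpha T)(1-\beta T)$ with $\alpha+\beta=a_p$ and $\alpha\beta=\chi_{D^*}(p)p^{m}$, this is a direct substitution using $\chi_{D^*}(p)^{2}=1$. Since $\alpha^{2}+\alpha\beta+\beta^{2}=a_p^{2}-\chi_{D^*}(p)p^{m}$, the $T$-coefficient of $L_p(\chi_{D^*}\otimes\Sym^2(F),T)$ is $-(a_p^{2}-\chi_{D^*}(p)p^{m})$; combining with $-\sum_{j=1}^{n-3}p^{j}=-p(p^{n-3}-1)/(p-1)$ from the product yields the stated formula for $\lambda_{p,1}$.

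The computation is routine once Theorem~\ref{RallisThm} is in hand. The main bookkeeping challenge lies in part (b): reconciling the paper's normalization of Satake parameters $\alpha_0^{2}\alpha_1\cdots\alpha_g=1$ used in defining the standard $L$-function with the arithmetic Satake parameters of a classical eigenform carrying the nontrivial quadratic nebentypus $\chi_{D^*}$, so that the identification $L_p(\chi_{D^*}\otimes F,p^{m}T,\text{std})=L_p(\chi_{D^*}\otimes\Sym^2(F),T)$ goes through in the correct convention. Once that translation is pinned down, verifying the endoscopic decomposition in the two lowest-depth cases reduces to matching up linear terms, and no further input beyond Rallis is needed.
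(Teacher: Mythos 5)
Your proposal is correct and follows essentially the same route as the paper: identify depth $0$ with the Eisenstein eigenform via $\theta^{(0)}(\phi)=\langle\phi,[1,\dots,1]\rangle$ and orthogonality, apply Theorem~\ref{RallisThm}(b) with $g=0$ and $g=1$, recognize $L_p(\chi_{D^*}\otimes F,p^{m}T,\mathrm{std})$ as the (twisted) symmetric square, and compare linear terms. The only cosmetic difference is that you work with the roots $\alpha,\beta$ of the Hecke polynomial where the paper uses the spinor Satake parameters $\{\beta_p,\chi_{D^*}(p)\beta_p^{-1}\}$; the resulting computation of $\alpha^2+\alpha\beta+\beta^2=a_p^2-\chi_{D^*}(p)p^{n/2-1}$ is identical.
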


\begin{proof}
If $\phi=[1,1,...,1]$ is the Eisenstein eigenform then $d_\phi = 0$ since \[\theta^{(0)}(\phi) = \text{mass}(\Lat) = \sum_{i=1}^{h}\frac{1}{\#\OO(\Lambda_i)} > 0.\] Conversely, if $\phi$ is not Eisenstein then $d_\phi > 0$ since $\theta^{(0)}(\phi) = 0$ (by definition of the cuspidal subspace).
The Euler factors for the Eisenstein eigenform immediately follow from Theorem \ref{RallisThm}, and the corresponding formula for $\lambda_{p,1}$ follows by comparing linear terms. This formula is expected since the right-hand side is the total number of $p$-neighbours of $\Lat$.

We next consider depth $d_\phi = 1$. To prove the formula for the Euler factor we again use Theorem \ref{RallisThm}. Letting $\{\beta_p, \chi_{D^*}(p)\beta_p^{-1}\}$ be the (spinor) Satake parameters of $F$ at $p$, we find that the (standard) Satake parameters of $F$ at $p$ are $\{\chi_{D^*}(p)\beta_p^2,1, \chi_{D^*}(p)\beta_p^{-2}\}$. These are readily recognised as those of the symmetric square lift $\text{Sym}^2(F)$ of $F$ to $\text{GL}_3$. Hence in this case $L_p(\chi_{D^*}\otimes F, p^{\frac{n}{2}-1}T,\std) = L_p(\chi_{D^*}\otimes \text{Sym}^2(F),T)$, proving the claim. 

Once again, comparing linear terms gives
\begin{equation}
\begin{aligned}\lambda_{p,1} &= p^{\frac{n}{2}-1}(\beta_p^2 + \chi_{D^*}(p) + \beta_p^{-2}) + p\left(\frac{p^{n-3}-1}{p-1}\right)\\ &= p^{\frac{n}{2}-1}((\beta_p+\chi_{D^*}(p)\beta_p^{-1})^2 - \chi_{D^*}(p)) + p\left(\frac{p^{n-3}-1}{p-1}\right) \\ &= a_p^2 - \chi_{D^*}(p)p^{\frac{n}{2}-1} + p\left(\frac{p^{n-3}-1}{p-1}\right)\end{aligned}
\end{equation}
as desired.
\end{proof}

Given the above, it makes sense to focus on finding higher rank lattices that give eigenvectors of higher depth, i.e., $d_\phi \geq 2$. Some of these will still only relate to genus $1$ data (the standard $L$-function of $F$ might break up into $L$-functions of classical modular forms and Dirichlet $L$-functions, e.g.\ if $F$ is an Ikeda lift). However, when the lattice has rank greater than $4$, some will relate to genuine Siegel cusp forms of higher genus, and so are much more mysterious. 

\section{Higher rank}\label{sec:rank-6-8}


Our investigations of orthogonal modular forms also have applications to lattices of rank greater than $4$.  One such application is to find formulas analogous to those of Chenevier--Lannes \cite{cl}*{Th\'eor\`eme A}, expressing the number of ways that the two even unimodular lattices of rank $16$ are $p$-neighbours of each other in terms of the coefficients of $\tau(p)$ and powers of $p$.  There is no genus of even unimodular lattices of order greater than $1$ in rank less than $16$, so we considered lattices of small discriminant.  It is also interesting to study the set of possible types of automorphic forms and their $L$-functions that arise in a given weight and to try to realize them all on specific genera.  In some cases this gives a method for computing Hecke eigenvalues of Siegel modular forms, though not a systematic one since we cannot necessarily produce a genus of lattices corresponding to a given form. 

\subsection*{Rank \texorpdfstring{6}{6}}

 We begin our study of lattices of rank $6$ with a typical small example. Once again, $p$ is assumed to be a good prime, i.e., $p\nmid D$.

\begin{ex} Consider the genus of integral lattices of discriminant $D=39$ as in Example \ref{ex:rank-6-disc-39}.
 The two eigenforms are explained by Theorem \ref{depth01}. The Eisenstein eigenform $\phi_1 = [1,1]$ satisfies $d_{\phi_1} = 0$ and has $L$-polynomials \[L_p(\phi_1,T) = (1-\chi_{-39}(p)p^2T)(1-T)(1-pT)(1-p^2T)(1-p^3T)(1-p^4T)\] and so \[\lambda_{p,1} = \left(\frac{p^5-1}{p-1}\right) + \chi_{-39}(p) p^2.\]
 
 The other eigenform $\phi_2 = [6,-5]$ satisfies $d_{\phi_2} = 1$ and has $L$-polynomials \[L_p(\phi_2,T) = L_p(\chi_{-39}\otimes \Sym^2(f),T)(1-pT)(1-p^2T)(1-p^3T)\] and so \[\lambda_{p,1} = a_p^2 - \chi_{-39}(p)p^2 + p\left(\frac{p^3-1}{p-1}\right)\] (where $a_p$ are the $T_p$ eigenvalues of the newform $f\in S_3(\Gamma_0^{(1)}(39),\chi_{-39})$ with LMFDB label
\modform{39}{3}{d}{c}).

Here, the map $\thet{1}$ is injective and so only classical modular forms contribute to the eigenvalues.
\end{ex}

\begin{ex}\label{ex: gen2lift1} Consider the genus of integral lattices of
  discriminant $D = 75$ that contains the lattice $A_4 \oplus \Lambda_{15}$, where $\Lambda_{15}$ is a lattice of rank $2$ spanned by vectors $x, y$ of norm $4$ with $(x,y) = 1$, as in Example \ref{ex:rank-6-disc-75}. 
Two eigenforms are explained by Theorem $\ref{depth01}$. The Eisenstein eigenform $\phi_1 = [1,1,1]$ is as in the above example (but with character $\chi_{-75}$). The cusp form $\phi_1 = [12,5,-9]$ has depth $d_{\phi_2} = 1$ and so is also as in the above example (but with character $\chi_{-75}$ and modular form $f$ with LMFDB label \modform{75}{3}{c}{e}).

The eigenform $\phi_3 = [16,-10,3]$ has depth $d_\phi = 2$ and generates the kernel of $\theta^{(1)}$. Computation suggests that \[\lambda_{p,1} = (p+1)b_p + (1+\chi_{-3}(p)) p^2,\] where the $b_p$ are
the $T_p$ eigenvalues of the modular form $f\in S_4(\Gamma_0(5))$ with LMFDB label \modform{5}{4}{a}{a}. This may be proved using Theorem \ref{RallisThm}. We know that \[L_p(\phi_3,T) = L_p(\chi_{-3}\otimes F, p^2T,\std)(1-p^2T),\] where $F = \theta^{(2)}(\phi_3)\in S_3(\Gamma_0^{(2)}(75),\chi_{-3})$.  There is no obvious theoretical reason for $F$ to be a lift.
However, our algorithm readily computes that \[L_2(\phi_3,T) = (1-4T)(1+4T)(1+4T+8T^2)(1+8T+32T^2),\] immediately suggesting that $F$ is a lift. We computed more $L$-polynomials and found that they factor in the same way. In fact, it can be shown 
that $F$ is the Ikeda lift of $f$ to $\text{Sp}_4$ (i.e., the Saito--Kurokawa lift of $f$), so that \[L_p(\phi_3,T) = (1-p^2T)(1-\chi_{-3}(p)p^2T)L_p(\chi_{-3}\otimes f,T)L_p(\chi_{-3}\otimes f,pT).\] The formula for $\lambda_{p,1}$ follows by comparing linear terms. 

This example shows that higher depth eigenforms can still have eigenvalues explained by classical modular forms (e.g.\ $\theta^{(d_\phi)}(\phi)$ could be an Ikeda lift, so that $L(\phi,s)$ is a product of $\text{GL}_2$ and Dirichlet $L$-functions).
\end{ex}

\begin{ex}\label{ex:rank-6-disc-84-lift}
There appear to be other eigensystems that are explainable by classical modular forms, but not corresponding to Ikeda lifts. For example, consider the genus of integral lattices with discriminant $D = 84$ that contains $\Lat = A_1^2 \oplus A_2 \oplus L_7$, where $L_7$ is the lattice of rank $2$ and discriminant $7$, as in Example~\ref{ex:rank-6-disc-84}. There is an eigenform $\phi = [2,-8,-2,0,5]$ of depth $d_\phi = 2$.

\begin{conj} The eigenvalues of $\phi$ are:
\[\lambda_{p,1} = pd_p + c_p + (1+\chi_{-3}(p)) p^2,\] where $c_p, d_p$ are the $T_p$ eigenvalues of
modular forms $f_1$ and $f_2$ with LMFDB labels \modform{4}{5}{b}{a} and \modform{7}{3}{b}{a} respectively.  

Alternatively, the eigenvalues are $$\lambda_{p,1} = pe_p^2 + c_p + (\chi_{-3}(p) - \chi_{-7}(p))p^2,$$ where $e_p$ are the coefficients of the modular form $f_3$ with LMFDB label \modform{49}{2}{a}{a}.
\end{conj}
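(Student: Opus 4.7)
The plan is to apply Theorem~\ref{RallisThm}(b) and identify the Siegel modular form $F \colonequals \thet{2}(\phi)$ as a Yoshida-type lift. Since $d_\phi = 2$, the form $F$ is a nonzero element of $S_3(\Gamma_0^{(2)}(84), \chi_{-84})$. With $n=6$, $g=2$, and $m = n/2 - 1 = 2$, the product in Theorem~\ref{RallisThm}(b) collapses to the single factor $(1 - p^2 T)$, so
\begin{equation*}
L_p(\phi,T) = L_p(\chi_{-84} \otimes F,\, p^2 T,\,\std) \cdot (1 - p^2 T).
\end{equation*}
The first conjectural formula is equivalent to the factorization
\begin{equation*}
L_p(\phi,T) = (1 - c_p T + \chi_{-4}(p) p^4 T^2)(1 - p d_p T + \chi_{-7}(p) p^4 T^2)(1 - p^2 T)(1 - \chi_{-3}(p) p^2 T),
\end{equation*}
as one checks by extracting the coefficient of $T$; this is precisely the pattern expected if $F$ is a Yoshida lift of the pair $(f_1, f_2)$, and the identity $\chi_{-4}\chi_{-7}\chi_{-3} = \chi_{-84}$ matches the central character of $F$.

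I would proceed in three stages. First, use the \texttt{LPolynomial} algorithm to verify this factorization numerically for enough small primes to pin down the Hecke eigenvalues of $F$. Second, identify $F$ rigorously: one natural approach is to construct the Yoshida theta lift $Y(f_1, f_2)$ via theta series from an appropriately ramified indefinite quaternion algebra (most likely ramified at $\{2,7\}$), and invoke the classical factorization of its standard $L$-function. Alternatively, a dimension count in $S_3(\Gamma_0^{(2)}(84), \chi_{-84})$ combined with strong multiplicity one would suffice: one shows that the subspace whose Hecke eigenvalues at sufficiently many primes agree with the prediction is one-dimensional. Once $F$ is identified, the first formula follows directly by extracting the linear coefficient in the Rallis identity.

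For the second formula, exploit the CM structure of $f_3 \in S_2(\Gamma_0(49))$, which corresponds to a Hecke character $\psi$ of $K \colonequals \Q(\sqrt{-7})$ of infinity type $(1,0)$ with trivial nebentypus. A direct calculation using the decomposition of Satake parameters at split and inert primes shows that $f_2 \in S_3(\Gamma_0(7), \chi_{-7})$ is the CM form associated to $\psi^2$, yielding the identity
\begin{equation*}
d_p = e_p^2 - (1 + \chi_{-7}(p))\, p \quad \text{for all } p \neq 7.
\end{equation*}
Substituting this identity into the first formula immediately produces the second.

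The main obstacle is the rigorous identification of $F$. While numerical verification is straightforward using our algorithm, proving that $F$ equals the Yoshida lift of $(f_1, f_2)$ requires delicate local analysis of the theta correspondence at the ramified primes $2, 3, 7$ to ensure the lift lands in the correct space with level $84$ and character $\chi_{-84}$. The alternative dimension-count route hinges on computing $\dim S_3(\Gamma_0^{(2)}(84), \chi_{-84})$ (or a suitable Hecke-stable subspace), which is itself nontrivial for genus $2$ with composite ramified level and nontrivial character; the verification of the CM identity for $d_p$ and $e_p$, by contrast, is a routine exercise once the underlying Hecke characters are pinned down.
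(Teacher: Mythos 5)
First, be aware that the paper does \emph{not} prove this statement: it is stated as a conjecture inside Example~\ref{ex:rank-6-disc-84-lift}, checked numerically for all $p<40$, with the authors explicitly writing that they ``were unable to give a complete proof.'' Parts of your setup are correct and match the paper's implicit reasoning: with $n=6$, $g=2$, $m=2$, Theorem~\ref{RallisThm}(b) gives $L_p(\phi,T)=L_p(\chi_{-84}\otimes F,p^2T,\std)(1-p^2T)$; and your CM identity $d_p=e_p^2-(1+\chi_{-7}(p))p$ is right ($f_3$ corresponds to a Hecke character $\psi$ of $\Q(\sqrt{-7})$ and $f_2$ to $\psi^2$, equivalently $\Sym^2\rho_{f_3}\cong\rho_{f_2}\oplus(\text{character})$), so the two displayed formulas are indeed equivalent --- this is exactly why the paper phrases its heuristic as ``$F$ is a Miyawaki-style lift of $f_1$ and $f_3$.''

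The genuine gap is your identification of $F$. The conjectured eigenvalue $pd_p+c_p+(1+\chi_{-3}(p))p^2$ is an isobaric \emph{sum}: it forces the twisted, rescaled standard $L$-polynomial of $F$ to factor as $(1-c_pT+\chi_{-4}(p)p^4T^2)(1-pd_pT+\chi_{-7}(p)p^4T^2)(1-\chi_{-3}(p)p^2T)$, as you wrote. But this is \emph{not} ``the pattern expected if $F$ is a Yoshida lift of $(f_1,f_2)$'': the classical Yoshida theta lift $Y(g_1,g_2)$ has $L(Y(g_1,g_2),s,\std)=\zeta(s)\,L(g_1\otimes g_2,s+\ast)$, whose degree-$4$ factor is the Rankin--Selberg convolution with linear coefficient $c_pd_p$ (a product), not $c_p+pd_p$. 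Since $f_1$ and $f_2$ have CM by \emph{different} fields ($\Q(i)$ and $\Q(\sqrt{-7})$), $\rho_{f_1}\otimes\rho_{f_2}$ is induced from the biquadratic field and does not decompose as $\rho_{f_1}(j)\oplus\rho_{f_2}(j')$ (already $c_2d_2=-3c_2\neq c_2+2d_2=c_2-6$); the weights $5,3$ and distinct nebentypus characters also do not fit the classical Yoshida construction into scalar weight $3$. So executing stage two of your plan would establish a \emph{different, incorrect} formula. The object actually needed is the Miyawaki-style lift the paper alludes to, and rigorously constructing it and matching it with $\thet{2}(\phi)$ is precisely the obstruction the authors could not overcome; your fallback via a dimension count and ``strong multiplicity one'' in $S_3(\Gamma_0^{(2)}(84),\chi_{-84})$ is likewise not available off the shelf (no effective Sturm-type bound in genus $2$ at this level and character, and multiplicity one is delicate for $\mathrm{GSp}_4$ packets of this endoscopic type). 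As it stands the proposal does not close the gap, and its central identification contradicts the formula it aims to prove.
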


We have checked the above formulae for sufficiently many primes to be convinced that they are correct (all $p < 40$). However, we were unable to give a complete proof. The nature of the second formula suggests that $F = \theta^{(2)}(\phi)$ is a Miyawaki-style lift of $f_1$ and $f_3$ (this would explain the appearance of both a classical eigenform and a symmetric square). We were unable to verify this directly.
\end{ex}

In general, higher depth eigenforms are likely to have eigenvalues that are not completely explained by classical modular forms. The $L$-function cannot be expected to always factor into degree $1$ or $2$ pieces. Eventually we must see a contribution from higher genus Siegel modular forms.

\begin{ex}\label{ex:rank-6-nonlift}
Let us consider the genus of lattices of rank $6$ and discriminant $131$, as in Example~\ref{ex:rank-6-disc-131}.  One such lattice is obtained by adjoining a vector of norm $34$ to $D_5$ that pairs to $1$ with one of the two roots corresponding to a leaf of the Dynkin diagram that is adjacent to the vertex of degree $3$ and to $0$ with the other generators.  The kernel of the $\thet{1}$ map has dimension $1$.  In just a few seconds we use our algorithm to compute the $L$-polynomials for the corresponding eigenform $\phi$ at small primes $p = 3,5,7$ and find irreducible factors of degree $4$; for example
$$L_3(\phi,T) = (1-9T)^2 (1 + 14T + 138T^2 + 1134T^3 + 6561T^4).$$ This shows that $F = \theta^{(2)}(\phi)\in S_3(\Gamma_0^{(2)}(131),\chi_{-131})$ is a non-lift. This is  hard to check directly, since there are currently no general algorithms that allow us to compute $F$. One linear factor is explained by the zeta factor in Theorem \ref{RallisThm}, whereas the other degree $5$ piece corresponds to $L_p(\chi_{-131}\otimes F, p^2T,\std)$. We conclude that \[L_p(\phi,T) = L_p(\chi_{-131}\otimes F, p^2T,\std)(1-p^2T),\] and that $F$ contributes genuine genus $2$ data to the eigenvalues $\lambda_{p,k}$ of $\phi$ (i.e., via the standard lift to a $\text{GL}_5$ automorphic form). In particular, a comparison of linear terms gives: \[\lambda_{p,1} = b_{1,p^2} + p + p^2,\] for $p\neq 131$, where $b_{1,p^2}$ is the $T_{1,p^2}$ eigenvalue of $F$.
\end{ex}

Let us now consider $\thet{2}$; for simplicity we restrict to prime
discriminant.
It is not easy to be certain that an eigenvector is in the kernel of
$\thet{2}$, since there is no Sturm bound $b_p$ to tell us when we can conclude
that a linear combination is $0$ from the first $b_p$ terms being all $0$.
However, it seems reasonable to expect that if the kernel on the coefficients of the lattices of rank $2$ of smallest discriminant is stable and nontrivial over a substantial range, then this is genuinely the kernel of $\thet{2}$.  Under this assumption, we found that $\thet{2}$ appeared not to be injective for the genus of lattices of discriminant $599$, and for $19$ of the $30$ primes congruent to $3 \bmod 4$ between $600$ and $1000$.
(It is also not injective for a number of composite values, the smallest being $471$, but we made no systematic attempt to list these.)

\begin{rem}\label{rem:theta-2-wt-6}
  We might expect that $\thet{2}$ would almost always be injective for 
  $n = 6$ and for only finitely many genera in larger even rank.
Indeed, the mass of a lattice of rank $n$ and discriminant
$p$ grows like $p^{(n-1)/2}$ (this follows easily from the mass
formula of \cite{cs}), and thus the number of lattices in the genus is of this order.  On the other hand, the codomain of $\thet{2}$ is a space of 
modular forms for a group of index roughly $p^3$ (for more detail see
\cite{f}*{Corollary II.6.10}), and so its dimension is proportional to
$p^3$.  Thus the case of rank $6$ is interesting, since $\thet{2}$ fails
to be injective for many $p$ even though there is no obvious reason for this.
\end{rem}

Some investigation of these lattice genera led us to Conjecture \ref{conj:wt-6-ker-theta-2-0}, which we restate here for convenience.

\begin{conj}\label{conj:wt-6-ker-theta-2}
  Let $G_p$ be the genus of lattices of rank $6$ and discriminant $p$
  and let $k_p$ be the
  number of isomorphism classes in $G_p$ of lattices with no
  automorphism of determinant $-1$.  Then the kernel of $\thet{2}$ on
  $G_p$ has dimension $k_p$.
\end{conj}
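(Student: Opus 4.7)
My strategy is to reduce the conjecture to a dimension equality for the det-weight orthogonal modular forms and then match the two sides via the theta correspondence. First, using the general description~\eqref{eqn:MlambdaVWweight},
\[
M(\Lambda, \det) \simeq \bigoplus_{i=1}^h (\det)^{\OO(\Lambda_i)}.
\]
The $i$-th summand is one-dimensional exactly when $\det$ restricts trivially to $\OO(\Lambda_i)$---i.e.\ when $\OO(\Lambda_i) = \SO(\Lambda_i)$, so $\Lambda_i$ has no automorphism of determinant $-1$---and is zero otherwise. Hence $\dim M(\Lambda,\det) = k_p$, matching the conjectured dimension of $\ker \thet{2}$.

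Second, I would enlarge $M(\Lambda)$ to the space $M^+(\Lambda)$ of functions on proper (i.e.\ $\SO$-)classes, of dimension $h+k_p$. Since every Siegel theta series $\theta^{(2)}(\Lambda_i)$ is invariant under the full orthogonal group, $\thet{2}$ extends to $M^+(\Lambda)$ and factors through the natural averaging surjection $\alpha\colon M^+(\Lambda)\twoheadrightarrow M(\Lambda)$ whose kernel is canonically identified with $M(\Lambda,\det)$. Therefore
\[
\dim \ker\thet{2}\big|_{M^+(\Lambda)} \;=\; k_p + \dim \ker\thet{2}\big|_{M(\Lambda)},
\]
so the conjecture is equivalent to $\dim \ker \thet{2}|_{M^+(\Lambda)} = 2k_p$.

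Third, I would analyze this refined statement via the global theta correspondence for the dual pair $(\OO(V),\Sp(4))$, combined with Ta{\"\i}bi's explicit Arthur multiplicity formulas~\cite{Taibi}. The aim would be to show that the cuspidal automorphic representations $\pi$ of $\OO(V)$ with vanishing genus-$2$ theta lift are precisely those satisfying $\pi\simeq \pi\otimes \det$, and to check---via a see-saw identity relating $(\OO(V),\Sp(4))$ to $(\SO(V),\Sp(4))$---that these representations contribute exactly the two-dimensional packet (one from each extension to $\OO(V)$) attached to each split class, giving the desired total $k_p + k_p = 2k_p$ inside $M^+(\Lambda)$.

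\textbf{Main obstacle.} The decisive step is the final one: rigorously identifying the vanishing locus of $\thet{2}$ with the $\det$-self-dual locus on the spectral side, with the correct multiplicity. In the rank-$6$, genus-$2$ regime, the Rallis inner product formula expresses $\|\thet{2}(\phi)\|^2$ in terms of an $L$-value that is \emph{not} at the centre of symmetry, so the standard nonvanishing theorems of Waldspurger, Gan--Ichino, and their successors do not apply off the shelf. A proof would likely require either a tailored nonvanishing result in this regime or a delicate global argument combining the Siegel--Weil formula, a careful sign analysis in the Weil representation tracking the splitting of $\OO(V) = \SO(V)\sqcup \SO(V)\cdot\epsilon$, and the dimension estimates of Remark~\ref{rem:theta-2-wt-6}. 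Those estimates also explain the striking specificity of the conjecture to rank $6$ with prime discriminant: the target of $\thet{2}$ is only marginally larger than $M(\Lambda)$ in this range, so the $\det$-twist is the dominant (and conjecturally only) obstruction to injectivity.
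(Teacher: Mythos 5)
First, a point of orientation: the statement you are addressing is stated in the paper as a \emph{conjecture}. The authors verify it computationally for $p<1000$ and explicitly write that they do not have a heuristic or conceptual reason which might explain it, so there is no proof in the paper to compare against. Your first two steps are correct and are genuinely worth having: the identification $\dim M(\Lambda,\det)=k_p$ via \eqref{eqn:MlambdaVWweight}, and the reformulation on the proper class set $M^+(\Lambda)$, of dimension $(h-k_p)+2k_p=h+k_p$, in which $M(\Lambda,\det)$ sits inside $\ker\thet{2}|_{M^+(\Lambda)}$ for the trivial reason that Siegel theta series depend only on the full isometry class. This reduces the conjecture to the clean statement $\dim\ker\thet{2}|_{M^+(\Lambda)}=2k_p$, i.e.\ that the $\det$-twist is the only obstruction to injectivity of $\thet{2}$ on proper classes. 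That is a conceptual framing the paper itself lacks, and it is a useful observation.

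However, your third step is not a proof but a translation of the problem into representation-theoretic language, and its key assertion is both unjustified and, as stated, doubtful. You claim that the cuspidal $\pi$ on $\OO(V)$ with vanishing genus-$2$ theta lift are exactly those with $\pi\cong\pi\otimes\det$. Two concrete problems arise. (i) Vanishing of the \emph{classical} map $\thet{2}$ on an eigenform is not the same as vanishing of the \emph{global} theta lift of the associated automorphic representation: the classical map uses one particular Schwartz function (attached to the lattice $\Lambda$ and the Gaussian), so $\thet{2}(\phi)$ could vanish while the abstract lift to $\Sp_4$ is nonzero; bridging this requires control of the local theta correspondence at the ramified prime $p$ and of the specific vectors chosen there. (ii) Even for the abstract lift, the dichotomy ``vanishes at genus $2$ iff $\pi\cong\pi\otimes\det$'' does not follow from the tower and conservation relations, which constrain the first occurrences of $\pi$ and $\pi\otimes\det$ jointly but do not force the first occurrence of a $\det$-self-dual $\pi$ to be exactly $3$, nor exclude a non-self-dual $\pi$ whose genus-$2$ lift vanishes for $L$-value reasons; the Rallis inner product formula places the relevant $L$-value away from the centre of symmetry, but that does not by itself yield nonvanishing. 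You correctly flag this as the main obstacle, yet it is the entire mathematical content of the conjecture, so the proposal does not close the gap. As a proof it is incomplete; as a heuristic and a roadmap it goes beyond what the paper offers.
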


In particular, such a lattice has no vectors of norm $2$, since
reflection in such a vector has determinant $-1$.  This explains why no such lattices exist for small $p$.

\begin{ex}
We consider the root lattice $A_6$, of rank $6$ and discriminant $7$, as in Example~\ref{ex:rank-6-disc-7}. In trivial weight it only admits an Eisenstein series, but in weight $(4,0,0)$ we find a cusp form $\phi$. We find that for the first few primes
$$
L_p(\phi, T) = L_p(\chi_{-7} \otimes \Sym^2(f), T) (1-p^5 T)(1 - p^6 T)(1 - p^7 T)
$$
where $f \in S_7(\Gamma_0^{(1)}(7), \chi_{-7})$ with LMFDB label \modform{7}{7}{b}{b}.
\end{ex}

\begin{ex}
We consider the lattice from Example~\ref{ex:rank-6-disc-39}, now with forms of nontrivial weight $(2,0,0)$.
The space is of dimension  $4$, and it consists of two Galois orbits of eigenforms, of sizes $3$ and $1$. Denote an eigenform from each orbit by $\phi_1$, $\phi_2$, respectively. 

It appears that
$$
L_p(\phi_1, T) = L_p(\chi_{-39} \otimes \Sym^2(f), T)(1-p^3 T)(1 - p^4 T)(1 - p^5 T)
$$
for all $p \neq 3,13$, where $f \in S_5(\Gamma_0^{(1)}(39), \chi_{-39})$ with LMFDB label \modform{39}{5}{d}{d}; this would follow from an extension of the theta map to higher weight and the corresponding extension of Theorem \ref{RallisThm}.

We also find for $p=2,5,7,11$ that 
$$
L_p(\phi_2, T) = (1 + a_p T + p^8 T^2)(1 - \chi_{-39}(p) p^4 T)(1+p^4 T)(1-p^4 T)^2 
$$
with $a_2 = 2 \cdot 13, a_5 = 47 \cdot 13, a_7 = 49 \cdot 13, a_{11} = -682 \cdot 13$. 
\end{ex}

\subsection*{Rank \texorpdfstring{8}{8}}


As in the case of rank $6$, for small discriminants, the map $\thet{1}$ is injective and we can easily express the eigenvalues of the Kneser matrices in terms of ordinary modular forms.  

\begin{ex} \label{ex:d21}
Consider the genus of lattices of discriminant $D = 21$ containing $\Lat = A_6 \oplus A_2$, as in Example~\ref{ex:rank-8-disc-21}.
In this case $\#\Cls(\Lat) = 3$.  Aside from the Eisenstein eigenform $\phi_1 = [1,1,1]$ with \[L_p(\phi_1,T) = (1-\chi_{21}(p)p^3T)(1-T)(1-pT)(1-p^2T)(1-p^3T)(1-p^4T)(1-p^5T)(1-p^6T)\] and \[\lambda_{p,1} = \left(\frac{p^7-1}{p-1}\right) + \chi_{21}(p)p^3,\] we have eigenforms $\phi_2  = [7,-15,84]$ and $\phi_3 = [3,-4,-32]$, of depth $d_{\phi_2} = d_{\phi_3} = 1$ with  \[L_p(\phi_i,T) = L_p(\chi_{21}\otimes\text{Sym}^2(f_i),T)(1-pT)(1-p^2T)(1-p^3T)(1-p^4T)(1-p^5T)\] and \[\lambda_{p,1} = a_p^2 - \chi_{21}(p)p^3 + p\left(\frac{p^5-1}{p-1}\right),\] where the $a_p$ are the $T_p$ eigenvalues of newforms $f_1, f_2\in S_4(\Gamma_0^{(1)}(21),\chi_{21})$ with LMFDB labels \modform{21}{4}{c}{a} and \modform{21}{4}{c}{b}.

To illustrate our initial goal for this project, we give explicit expressions for the $p$-neighbour adjacency matrices.  Let $\Lat_1 = \Lat$, and let $\Lat_2, \Lat_3$ be the other lattices in the genus, such that the root sublattices of $\Lat_2$ and $\Lat_3$ are $E_6$ and $E_7$ respectively.  Given the eigenvectors and eigenvalues as above, this amounts to a simple change of basis.  The $p$-neighbour adjacency matrix is $1/1309$ times

\[\begin{pmatrix}
816 &  816& 816\\
476  & 476 & 476\\
17 &   17   &  17 
\end{pmatrix}\lambda_{p,1}^{(1)}
+\begin{pmatrix}
196&  - 420& 2352\\
- 245   &  525 & - 2940 \\
49 &  - 105   &   588
\end{pmatrix}\lambda_{p,1}^{(2)}
+\begin{pmatrix}
297 & - 396 &  - 3168\\
- 231  &  308 & 2464\\
- 66  &   88  &   704
\end{pmatrix}\lambda_{p,1}^{(3)}\]
where the $\lambda_{p,1}^{(i)}$ are the $T_p$ eigenvalues for the respective $\phi_i$ given above. One could do the same for the $p^k$-neighbour matrices using the eigenvalues $\lambda_{p,k}^{(i)}$. Clearly the description in terms of the eigenvectors and eigenvalues is more perspicuous.
\end{ex}

For slightly larger discriminants we again expect to see a nontrivial kernel of $\thet{1}$. At first all such eigensystems are explained by classical modular forms (since once again $\theta^{(d_\phi)}(\phi)$ is a lift from $\text{GL}_2$).  For example, the smallest such discriminant is $D = 36$, the genus in question containing the lattice $A_2^2 \oplus D_4$. For the eigenvector $\phi$ of depth $d_\phi = 2$:  \[\lambda_{p,1} = (p+1)a_p + p^2(p+1)^2,\] where the $a_p$
are the coefficients of the newform $f\in S_6(\Gamma_0^{(1)}(3))$ with LMFDB label \modform{3}{6}{a}{a}. As previously explained, this is expected since $\theta^{(2)}(\phi)$ is the Ikeda lift of $f$ to $\text{Sp}_4$ (i.e., Saito--Kurokawa lift). 

Eventually, we expect to see genuine contributions from non-lift Siegel modular forms.  In studying such examples it is important to be able to compute $L$-polynomials, so our general code is essential, as the following example indicates.

\begin{ex}\label{ex:level-53-first-pass}
As in Example \ref{ex:rank-8-disc-53}, we consider the genus of lattices of discriminant $D = 53$. Here $\#\Cls(\Lat) = 8$.
The Eisenstein eigenform
$\phi_1 = [1,1,1,1,1,1,1,1]$ is as in the above example (but with character $\chi_{53}$). The depth $1$ eigenforms all lie in a (messy) Galois orbit with coefficients in the sextic field defined by
\[ x^6 - 2x^5 - 290x^4 - 388x^3 + 14473x^2 + 11014x - 81256.\]
We let $\phi_2$ be one representative of this orbit. Then the $L$-polynomials and eigenvalues of $\phi_2$ are also as in the above example (but with character $\chi_{53}$ and modular form $f\in S_4(\Gamma_0^{(1)}(53),\chi_{53})$ with LMFDB label \modform{53}{4}{b}{a}). 

The eigenform $\phi_3 = [6,0,-96,-21,-42,-16,0,216]$ has depth $d_{\phi_3} = 2$. However, in contrast with Example \ref{ex: gen2lift1}, the eigenform $F = \theta^{(2)}(\phi_3)\in S_4(\Gamma_0^{(2)}(53),\chi_{53})$ is not a lift. This is non-trivial to verify, since there are currently no general algorithms that would allow us to compute $F$ directly. However, we were able to check this by using our algorithm and computing $L$-polynomials of $\phi_3$: 
\begin{equation}
\begin{aligned}L_2(\phi_3,T) &= (1-16T)(1-8T)(1-4T)(1+8T)\\&\qquad\qquad (1+13T+118T^2+832T^3+4096T^4),\\ L_3(\phi_3,T) &= (1-81T)(1-27T)(1-9T)(1+27T)\\&\qquad\qquad (1-12T-129T^2-8748T^3+531441T^4),\\ L_5(\phi_3,T) &= (1-625T)(1-125T)(1-25T)(1+125T)\\
&\qquad\qquad (1-172T+12885T^2-2687500T^3+244140625T^4),\\ L_7(\phi_3,T) &= (1-2401T)(1-343T)^2(1-49T)\\
&\qquad\qquad (1+690T+288617T^2+81177810T^3+13841287201T^4). \end{aligned}
\end{equation}

Three of the linear factors are the zeta factors in Theorem \ref{RallisThm}, whereas the remaining degree $5$ piece corresponds to $L_p(\chi_{53}\otimes F, p^3T,\std)$. Once again, the presence of an irreducible degree $4$ factor in each case indicates that $F$ is a non-lift. We conclude that \[L_p(\phi_3,T) = L_p(\chi_{53}\otimes F, p^3T,\std)(1-p^2T)(1-p^3T)(1-p^4T),\] and that $F$ contributes genuine genus $2$ data to the eigenvalues $\lambda_{p,k}$ of $\phi_3$ (i.e., via the standard lift to a $\text{GL}_5$ automorphic form). In particular, a comparison of linear terms gives: \[\lambda_{p,1} = b_{1,p^2} + p^3 +p^2\left(\frac{p^3-1}{p-1}\right),\] for $p\neq 53$, where $b_{1,p^2}$ is the $T_{1,p^2}$ eigenvalue of $F$.
\end{ex}

In the rank $8$ case it is possible to see non-lift Siegel modular forms of genus $3$ contributing their eigenvalues. Indeed, we were able to compute a genus of lattices of discriminant $D = 269$ and observe an eigenform of depth $3$ corresponding to a non-lift Siegel eigenform $F$ of genus $3$ (clear after computing the corresponding $L$-polynomials). We were also able to check that this is the first prime discriminant for which this happens.


\subsection*{Higher rank}\label{sec:higher-ranks}

For larger ranks, we expect results similar to those we found for rank $6$ and $8$.  However, the transition to vectors of larger depth happens at smaller discriminant and the Hecke operators become much more difficult to calculate, so only a few examples can be analyzed completely.  We briefly discuss one example in rank $10$ and one in rank $12$.

\begin{ex}\label{ex:rank-10}
For rank $10$ and discriminant $D < 15$, the $\thet{1}$ map is always
injective, with results as predicted in Section \ref{subsec:small-depth}.
To illustrate this, we return to Example \ref{ex:A1011}, with root lattice $A_{10}$.  We find that
\begin{equation}
\begin{aligned}
L_p(\phi_1, T) &= (1-\chi_{-11}(p) p^4 T) \prod_{i=0}^8 (1-p^iT) \\
L_p(\phi_2, T) &= L_p(\chi_{-11} \otimes \Sym^2(f_1), T) \prod_{i=1}^7 (1-p^iT) \\
L_p(\phi_3, T) &= L_p(\chi_{-11} \otimes \Sym^2(f_2), T) \prod_{i=1}^7 (1-p^iT) 
\end{aligned}
\end{equation}
for all $p \neq 11$, where $f_1,f_2 \in S_5(\Gamma_0^{(1)}(11), \chi_{-11})$ are the forms with LMFDB labels \modform{11}{5}{b}{a} and  \modform{11}{5}{b}{b}, respectively.
\end{ex}

\begin{ex}
We now consider the genus $G$ consisting of the three lattices $D_4 \oplus D_6$, $D_8 \oplus A_1^2, E_7 \oplus A_1^3$.  There are three rational eigenvectors for $G$; in this ordering they are $[1,1,1]$, $[6,-4,-9]$, $[-12,-56,63]$, of depth $0, 1, 2$ respectively.  We already know how to describe the eigenvalues for the first two of these (the modular form whose symmetric square arises in the second has LMFDB label \modform{4}{5}{b}{a}).  For the third, the Siegel modular form
can once again be shown to be an Ikeda lift to $\text{Sp}_4$ (i.e., a Saito--Kurokawa lift), implying the following formula for eigenvalues:
\begin{equation} \label{eqn:p1ap4m}
\lambda_{p,1} =  (p+1)a_p + \chi_{-4}(p) p^4 + p^2\left(\frac{p^5-1}{p-1}\right),
\end{equation}
where the $a_p$ are the eigenvalues for the modular form with LMFDB label \textup{\modform{2}{8}{a}{a}}.
\end{ex}

\begin{ex}
Consider the genus of lattices of rank $10$ and discriminant $27$ that contains $E_6 \oplus A_2^2$, as in Example~\ref{ex:rank-10-disc-27}.  This is not a maximal lattice, and there are $2$ lattices in the genus, whose theta series are equal; in other words, $[1,-1]$ is an eigenform of depth $2$.  Its eigenvalues are of the form $(p+1)a_p + \chi_{-3}(p) p^4 + \sum_{i=2}^6 p^i$ where the $a_p$ come from the form with LMFDB label \modform{3}{8}{a}{a} (checked for $p = 2, 5, 7$), like \eqref{eqn:p1ap4m}.
\end{ex}

\begin{ex}\label{ex-rank-12}
We proceed to an example of rank $12$ and discriminant $16$: the genus of $D_6 \oplus D_6$.  It contains three other root lattices $E_7 \oplus D_4 \oplus A_1$, $D_{10} \oplus A_1 \oplus A_1$, and $E_8 \oplus A_1^4$, as well as a lattice containing $D_8 \oplus A_1^4$ with index $2$.  The kernel of $\thet{1}$ has dimension $3$; the kernel of $\thet{2}$ appears to have dimension $1$. The eigenvector of depth $1$ has eigenvalues 
\[\lambda_{p,1} = a_p^2 + p^5 +  p\left(\frac{p^9-1}{p-1}\right),\] 
where the $a_p$ come from eigenform with LMFDB label \modform{4}{6}{a}{a}.  
The eigenvectors of depth $2$ are both explained by an Ikeda lift to $\text{Sp}_4$, (i.e., Saito--Kurokawa lift) and so has eigenvalues \[\lambda_{p,1} = (p+1)a_p + p^5 + p^2\left(\frac{p^7-1}{p-1}\right),\] where the $a_p$ come from the eigenform with LMFDB label \modform{2}{10}{a}{a}.  The reason for the duplication is that the lattices in this genus are not maximal: for example, $E_7 \oplus D_4 \oplus A_1$ is a sublattice of index $2$ in $E_8$ (indeed, $E_7 + A_1 \cong \langle r \rangle \oplus r^\perp$, where $r$ is a root of $E_8$), and thus the forms for the genus that is represented by $E_8 \oplus D_4$ and $D_{12}$ appear twice here as well.  The situation is analogous to that of classical modular forms where the forms of weight $k$ and level $N$ appear twice in the space of modular forms of weight $k$ and level $pN$.
The eigenvector of depth greater than $2$ is unexplained and presumably has eigenvalues arising from a non-lift.
\end{ex}

%

\section{Eisenstein congruences}\label{sec:congruences}

One can often easily prove explicit congruences between the eigenvalues $\lambda_{p,k}$ of eigenforms $\phi\in M(\Lat)$. If these eigenforms are explicitly understood as endoscopic lifts (e.g.\ via their $L$-function) then this implies congruences between Hecke eigenvalues of eigenforms for lower rank groups. This has recently been a fruitful strategy when trying to prove non-trivial Eisenstein congruences. 

\begin{ex} \label{exm:sps128}
For a well-known example, consider the $16$-dimensional even unimodular lattice $\Lat = E_8\oplus E_8$ (with the standard inner product). Then famously $\Cls(\Lat) = \{[E_8\oplus E_8], [E_{16}]\}$ and one can calculate in a few seconds that: \[T_{2,1} = \left(\begin{array}{cc}20025 &18225\\ 12870 & 14670\end{array}\right)\] and diagonalizing produces the two eigenforms $\phi_1 = [1,1]$ and $\phi_2 = [405,-286]$ of $M(\Lat)$ with eigenvalues $\lambda_{2,1} = 32895$ and $\lambda'_{2,1} = 1800$ respectively. It is immediately clear that $\lambda_{2,1} \equiv \lambda'_{2,1} \pmod{691}$. Indeed, the simple fact that $286\phi_1 + \phi_2 \equiv [0,0] \pmod{691}$ implies the congruence $\lambda_{p,k} \equiv \lambda'_{p,k} \pmod{691}$ for all $p$ and $k$. 

The cusp form $\phi_2$ has depth $d_{\phi_2} = 4$ and $F = \theta^{(4)}(\phi_2)\in S_8(\text{Sp}_8(\mathbb{Z}))$ can be shown to be the Ikeda lift of $\Delta\in S_{12}(\SL_2(\Z))$ to $\text{Sp}_8$ (see Section $7.3$ of \cite{cl}). It follows that: \[L_p(\chi_D\otimes F, p^7T,\std) = (1-p^7T)L_p(\Delta,T)L_p(\Delta,pT)L_p(\Delta,p^2T)L_p(\Delta,p^3T),\] and so by Theorem \ref{RallisThm} we have the explicit formula (for all $p$):
\[\lambda'_{p,1} = \tau(p)\left(\frac{p^4-1}{p-1}\right) + p^7 + p^4\left(\frac{p^7-1}{p-1}\right).\] Since $\lambda_{p,1} = \frac{p^{15}-1}{p-1} + p^7$ the above congruence reduces to (for $k=1$): \[\left(\frac{p^4-1}{p-1}\right)\tau(p) \equiv \left(\frac{p^4-1}{p-1}\right)(1+p^{11}) \pmod{691}.\]

This is a scaling of the familiar Ramanujan congruence (the scaling factor can be removed by deeper work with the associated Galois representations). The existence of such Eisenstein congruences was used by Ribet in his proof of the converse to Herbrand's theorem \cite{ribet}, relating divisibility of special values of $\zeta(s)$ (i.e., Bernoulli numbers) with the Galois module structure of class groups of cyclotomic fields. In particular, Ramanujan's congruence relates the fact that $\text{ord}_{691}(B_{12}) > 0$ with the fact that $\Cl(\Q(\zeta_{691}))$ has an element of order $691$ satisfying $\sigma\cdot [\mathfrak{a}] = \chi_{691}^{-11}(\sigma)[\mathfrak{a}]$ for all $\sigma\in\text{Gal}(\Q(\zeta_{691})\,|\,\Q)$ (with $\chi_{691}$ the mod 691 cyclotomic character).
\end{ex}

Many other types of $\text{GL}_2$ Eisenstein congruence can be proved by computing orthogonal eigenforms and adopting the above strategy:

\begin{itemize}
\item{Congruences featuring modular forms of non-trivial level/character can be found by considering lattices with non-trivial discriminant. These have moduli dividing special values of Dirichlet $L$-functions.}
\item{Congruences of local origin are also found for lattices of varying discriminant. These have moduli dividing special values of Euler factors \cite{df2}.}
\item{Congruences featuring Hilbert modular forms over totally real fields $F$. Some of these were proven \cite{df} by computing with $\Z_F$-lattices with $F$ a real quadratic field of small discriminant. Some of these congruences were of Ramanujan type (with modulus explained by special values of $\zeta_F(s)$) but others were new and involved non-parallel weight (with modulus explained by special values of adjoint $L$-functions). The second family of congruences were observed experimentally, allowing the authors to make general conjectures.}
\end{itemize}

All such congruences provide evidence for the Bloch--Kato conjecture, a vast generalisation of the Herbrand--Ribet theorem (among other things). This links divisibility of special values of motivic $L$-functions with elements of prescribed order in various Bloch--Kato Selmer groups attached to these motives.

Beyond $\GL_2$ it becomes much harder to prove Eisenstein congruences. Even gaining computational evidence can be tricky, due to the lack of explicit algorithms for computing with higher rank automorphic forms. However, recent interest in computing orthogonal modular forms has led to proofs of non-trivial Eisenstein congruences for higher rank groups.
 
\begin{ex} \label{ex:harder}
A well-known conjecture of Harder suggests that if $j\geq 0$ is even, $k\geq 3$ and $f\in S_{j+2k-2}(\SL_2(\Z))$ is an eigenform, then a (large enough) prime $\mathfrak{q}$ of $\mathbb{Q}_f$ dividing $L_{\text{alg}}(f,j+k)$ should be the modulus of a congruence of the form \[\lambda_{F,p} \equiv \mu_{f,p} + p^{j+k-1}+p^{k-2} \pmod{\frakq},\] for some eigenform $F\in S_{j,k}(\text{Sp}_4(\Z))$ and some $\mathfrak{q}'\mid\mathfrak{q}$ in $\mathbb{Q}_{f,F}$. Typically ``large enough" means that $\mathfrak{q}$ lies above a rational prime $q> j+2k-2$. 

When $j=0$ the right-hand side is the $T_p$ eigenvalue of the Saito--Kurokawa lift of $f$ and so much has been proved. Before the recent work of Chenevier--Lannes, this congruence was unknown for even a single modular form satisfying $j>0$. Their work, featured in the book \cite{cl}, proved the first instance of Harder's conjecture (the case $(j,k) = (4,10)$ and $\mathfrak{q}\mid 41$). This is achieved by proving an explicit congruence between eigenforms in $M(\Lat)$ for $\Lat = E_8\oplus E_8\oplus E_8$ (i.e., $\Cls(\Lat)$ consisting of the $24$-dimensional Niemeier lattices). The congruence then follows once again by a comparison of $L$-functions, although it is now not a simple task to decompose into automorphic $L$-functions.
\end{ex} 

Work of M\'egarban\'e \cite{Megarbane} extended the above to certain high rank lattices of discriminant $2$, leading to proofs of Harder-type congruences (at level $1$). Recent work of Dummigan, Pacetti, Rama and Tornar{\'\i}a \cite{dprt} considers certain quinary lattices and links the eigensystems $\lambda_{p,k}$ with eigenvalues of paramodular forms. Explicit computations led to proofs of Harder-type congruences of paramodular level, as predicted in the paper of Fretwell \cite{Fretwell}.

As expected, some of our own computations have resulted in proofs of new Eisenstein congruences, this time of Kurokawa--Mizumoto type (extending those in \cite{Kurokawa} and \cite{Mizumoto}). We discuss an example.

\begin{ex}\label{ex:disc-53-again} 
Consider the genus of lattices of rank $8$ and discriminant $D=53$. Recall that there is a depth $1$ eigenform $\phi_2 = [6,0,-96,-21,-42,-16,0,216]$ with \[\lambda'_{p,1} = a_p^2 - \chi_{53}(p)p^3 + p\left(\frac{p^5-1}{p-1}\right),\] for $p\neq 53$, where $a_p$ is the $T_p$ eigenvalue of $f = \theta^{(1)}(\phi_3)\in S_4(\Gamma^{(1)}_0(53),\chi_{53})$ (LMFDB label \modform{53}{4}{b}{a}). There is also a rational eigenform $\phi_3$ of depth $2$, with \[\lambda_{p,1} = b_{1,p^2} + p^3 + p^2\left(\frac{p^3-1}{p-1}\right),\] where $b_{1,p^2}$ is the $T_{1,p^2}$ eigenvalue of a non-lift eigenform $F\in S_4(\Gamma_0^{(2)}(53),\chi_{53})$. 

It is possible to normalise the eigenform $\phi_3$ so that the entries are algebraic integers with no common prime ideal factors. After doing this it is then a true splendor to observe that $\phi_2 + 273\phi_1 \equiv [0,...,0] \pmod{\mathfrak{q}}$ for some prime $\mathfrak{q}\mid 397$ of the appropriate sextic number field. This implies a congruence  $\lambda_{p,k} \equiv \lambda'_{p,k} \pmod{\mathfrak{q}}$ for all $p\neq 53$ and $1\leq k \leq 4$. In particular for $k=1$ this reduces to \[b_{1,p^2} \equiv a_p^2 - (1+\chi_{53}(p))p^3 + p + p^5 \pmod{\mathfrak{q}},\] for $p\neq 53$.

It would be interesting to exhibit this Siegel modular form directly, perhaps as a Borcherds product (if it admits such a description).
\end{ex}

Recall that the moduli of Eisenstein congruences should come from special values of $L$-functions. So how do we explain the modulus $\mathfrak{q}$ in the above example? Numerical computations suggest that the norm of 
\[ \frac{L(\text{Sym}^2(f),1)}{\pi^2L(\text{Sym}^2(f),3)} \]
is equal to $24250736770795028/2197125$, which has numerator divisible by $397$. The functional equation would then imply that $\text{ord}_{\mathfrak{q}}(L_{\text{alg}}(\text{Sym}^2(f),6)) > 0$.


As far as the authors are aware the congruence of Example~\ref{ex:disc-53-again} was not previously known, and is intractable using other existing techniques (as with Harder-type congruences). The fact that we were able to prove it is an interesting application of our computations. 

We made similar calculations for all primes congruent to $1 \bmod 4$ and less than $200$ for which $\thet{1}$ has a kernel.  (The discriminant is required to be prime to simplify the determination of the bad factors of the $L$-functions.)  In every case we found a similar divisibility: that is, for all large primes $q$ for which there was a congruence modulo a prime dividing $q$ between a vector in the kernel of $\thet{1}$ and one in the kernel of $\thet{0}$ we found a cusp form $f$ of weight $4$, level $q$, and quadratic character such that $L(\Sym^2(f),1)/\pi^2 L(\Sym^2(f),3)$ appeared to be an algebraic number of norm divisible by $q$.


Likewise, our computations of $L$-functions have been restricted to rank $8$.  It is difficult to perform such computations for genera of rank $6$ because there is only one pair of critical values of the $L$-function, and so we would need another way to calculate the period and find the algebraic part.  However, we have been able to compute $L$-polynomials of Siegel modular forms in this setting and find the predicted congruence.  Our computations suggest the following general conjecture.

\begin{conj}\label{cong}
Let $N\geq 1$ be square-free, $\chi$ be a quadratic character mod $N$, $j\geq 0$ and $k\geq 3$. Suppose $f\in S_{j+k}(\Gamma_0^{(1)}(N),\chi)$ is an eigenform with $\emph{ord}_{\mathfrak{p}}(L_{\emph{alg}}(\emph{Sym}^2(f),j+2k-2)) > 0$ for some prime $\mathfrak{q}$ of $\mathbb{Q}_f$ lying above a rational prime $q > 2(j+k)-1$. Then there exists an eigenform $F\in S_{j,k}(\Gamma^{(2)}_0(N),\chi)$ and a prime $\mathfrak{q}'\mid \mathfrak{q}$ of $\mathbb{Q}_{f,F}$ such that \[b_{1,p^2} \equiv a_p^2 -\chi(p)p^{j+k-1} - p^{j+2k-5} + p^{j+2k-3}+p^{j+1} \pmod{\mathfrak{q}'},\] for all $p\nmid N$ (with $T_p(f) = a_p f$ and $T_{1,p^2}(F) = b_{1,p^2}F$).
\end{conj}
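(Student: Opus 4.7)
The plan is to generalize the strategy of Example~\ref{ex:disc-53-again} to arbitrary squarefree $N$ and arbitrary $(j,k)$, using the orthogonal modular forms framework as the intermediary between $f$ and $F$, and leveraging the divisibility hypothesis on the special $L$-value of $\mathrm{Sym}^2(f)$ to produce the required congruence.

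First, I would seek a lattice $\Lambda$ of even rank $n=2k$ and (half-)discriminant $D$ a multiple of $N$, together with an irreducible algebraic weight representation $W_j$ (roughly of highest weight $(j,0,\dots,0)$), such that the space $M(\Lambda,W_j)$ simultaneously contains a depth-$1$ eigenform $\phi_f$ with $\theta^{(1)}(\phi_f)$ proportional to $f$ and a depth-$2$ eigenform $\phi_F$ whose Siegel theta image $F \colonequals \theta^{(2)}(\phi_F) \in S_{j,k}(\Gamma_0^{(2)}(N),\chi)$ is a non-lift. Existence of $\phi_f$ should follow from a version of Theorem~\ref{thm: Clifford rank four} extended to higher rank, together with nonvanishing of the theta lift to $\mathrm{O}(n)$, where local conditions at primes dividing $N$ select which newforms arise; existence of $\phi_F$ should follow from Arthur's classification \cite{Taibi} once the corresponding global Arthur parameter is shown to contribute to the definite orthogonal group.

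Next, the heart of the matter: translate the hypothesized $\mathfrak{q}$-divisibility of $L_{\mathrm{alg}}(\mathrm{Sym}^2(f), j+2k-2)$ into a congruence
\[ \phi_f \equiv c\,\phi_F \pmod{\mathfrak{q}'} \]
in $M(\Lambda,W_j)$ for some unit $c$ and prime $\mathfrak{q}'$ of $\mathbb{Q}_{f,F}$ above $\mathfrak{q}$.  The Bloch--Kato philosophy identifies the $\mathfrak{q}$-part of the algebraic $L$-value with the $\mathfrak{q}$-torsion of a Selmer group for $\mathrm{Sym}^2(\rho_f)(j+2k-2)$, and constructing a nontrivial class from a congruence (or vice versa) fits squarely into the framework of Dummigan \cite{db} and the Urban/Skinner--Urban theory of Eisenstein congruences for orthogonal and symplectic groups.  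Once this congruence of orthogonal eigenforms is established, applying Theorem~\ref{depth01}(b) (extended to weight $W_j$) to $\phi_f$ and Theorem~\ref{RallisThm}(b) with $g=2$ to $\phi_F$ yields explicit formulas for the linear coefficients of $L_p(\phi_f,T)$ and $L_p(\phi_F,T)$; direct comparison of these coefficients modulo $\mathfrak{q}'$ then produces the stated congruence, exactly as carried out in Example~\ref{ex:disc-53-again}.

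The principal obstacle is this translation step: producing a specific congruence between two automorphic forms on the compact orthogonal group from a Bloch--Kato Selmer element.  In the computationally verified cases the congruence is tautological once the orthogonal space has been enumerated, since it reduces to an explicit linear-algebra identity, and one then has to \emph{observe} divisibility of the $L$-value to close the loop.  To make the argument uniform in $(N,j,k)$, however, one would need either a Ribet--style lattice construction inside an appropriate cohomology theory for $\mathrm{O}(n)$, or deformation-theoretic tools (eigenvarieties and their attached Galois representations) at moderate orthogonal rank; neither is presently available in the required generality.  A secondary obstacle is the realization step of ensuring that a single lattice $\Lambda$ simultaneously supports both $\phi_f$ and $\phi_F$, which should be amenable via Rallis's inner product formula together with Waldspurger--style local nonvanishing criteria.
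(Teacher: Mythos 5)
The statement you are trying to prove is a \emph{conjecture} in the paper: the authors do not prove it. What they provide is (i) a derivation of its expected \emph{shape}, by taking the Kurokawa--Mizumoto--type congruence $b_p \equiv a_p(\chi(p)+p^{k-2})$ predicted by the Bergstr\"om--Dummigan framework \cite{db}, writing out the spinor Satake parameters of both sides, converting to standard Satake parameters, and reading off the corresponding $T_{1,p^2}$ congruence; and (ii) computational verification of instances (e.g.\ $N=53$, $j=0$, $k=4$, $\mathfrak{q}\mid 397$ in Example~\ref{ex:disc-53-again}), where the congruence is \emph{observed} as an explicit integral linear relation between eigenvectors in $M(\Lambda)$ and the $L$-value divisibility is then checked numerically afterwards. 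Your proposal runs this logic in the opposite direction---from the $L$-value divisibility to the congruence---which is the right shape for a proof but is precisely the part nobody knows how to do.

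Concretely, the gaps are these. First, the translation step you flag as ``the heart of the matter'' is indeed missing: there is no Ribet-style or Urban-style construction currently available that converts $\mathrm{ord}_{\mathfrak{q}}(L_{\mathrm{alg}}(\mathrm{Sym}^2 f, j+2k-2))>0$ into a congruence of Hecke eigensystems on a definite orthogonal group of rank $2k$; you acknowledge this, so your argument is a program, not a proof. Second, even granting a congruence $\phi_f\equiv c\,\phi'\pmod{\mathfrak{q}'}$ with \emph{some} other eigenform $\phi'$, you must still show that $\phi'$ has depth exactly $2$, i.e.\ that the congruent eigensystem is genuinely attached to a cuspidal genus-$2$ Siegel form $F=\theta^{(2)}(\phi')$ rather than to the Eisenstein series, another classical form, or a form of higher depth; your proposal does not address how to exclude these alternatives. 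Third, the realization step is delicate in ways you understate: the theta map \eqref{eqn:thetadef} lands in level equal to the discriminant $D$ of $\Lambda$, so you need a genus with $D=N$ (or control of oldforms), and for $j>0$ both Theorem~\ref{depth01}(b) and the Rallis identity are only asserted in the paper for trivial weight, with the higher-weight extension itself left unproven. None of this makes your outline wrong as a strategy---it is essentially the authors' own strategy for individual cases---but as a proof of the general conjecture it has an irreducible gap at the Bloch--Kato-to-congruence step, which is why the statement remains a conjecture.
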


The congruence proved in the example above is the case of $j=0, k=4, N=53$, $\chi = \chi_{53}$ and $\mathfrak{q}\mid 397$ in the number field defining $\phi_3$. This conjecture should hold in higher generality, but for ease of exposition we decided to state it only for square-free level and quadratic character.

Naturally, one asks how Conjecture \ref{cong} fits into the general framework of Eisenstein congruences. The following gives a brief justification.

Just as the right-hand side of Ramanujan-type congruences involve eigenvalues of Eisenstein series, and the right-hand side of Harder-type congruences involve Saito--Kurokawa like eigenvalues, the right-hand side of congruences of Mizumoto-Kurokawa type involve Klingen-Eisenstein like eigenvalues. 

More precisely, suppose that $j\geq 0$ and $k\geq 3$. Then whenever $f\in S_{j+k}(\text{SL}_2(\mathbb{Z}))$ is an eigenform and $\mathfrak{q}$ is a (large enough) prime of $\mathbb{Q}_f$ such that $\text{ord}_{\mathfrak{q}}(L_{\text{alg}}(\text{Sym}^2(f),j+2k-2))>0$, one expects a congruence of the form \[b_p \equiv a_p(1+p^{k-2}) \pmod{\mathfrak{q}'},\] for some eigenform $F\in S_{j,k}(\text{Sp}_4(\mathbb{Z}))$ and some $\mathfrak{q}'\mid\mathfrak{q}$ in $\mathbb{Q}_{f,F}$ (with $T_p(f) = a_p f$ and $T_p(F) = b_pF$). Typically, ``large enough" means that $\mathfrak{q}$ lies above a rational prime $q>2(j+k)-1$.

The right-hand side of this congruence is the eigenvalue of a genus $2$ Klingen-Eisenstein series attached to $f$ (although technically we would need $k>4$ to allow convergence).

Generalisations of Mizumoto-Kurokawa congruences exist for non-trivial level and character. For example, if $f\in S_{j+k}(\Gamma_0^{(1)}(N),\chi)$ satisfies the same divisibility condition then one instead expects a congruence of the form \[b_p \equiv a_p(\chi(p)+p^{k-2})\pmod{\mathfrak{q}'}.\] We are indebted to Neil Dummigan for explaining this to us, as well as explaining how it follows from general conjectures on Eisenstein congruences for split reductive groups in his recent paper with Bergstr\"om \cite{db}*{Section 6}.

Conjecture \ref{cong} is the analogue of this congruence but for the $T_{1,p^2}$ eigenvalues of $F$. To see this, note that the (spinor) Satake parameters at $p\nmid N$ corresponding to the right-hand side of the congruence are $\{\alpha_pp^{\frac{k-2}{2}}, \chi(p)\alpha_pp^{-\frac{k-2}{2}}, \alpha_p^{-1}p^{\frac{k-2}{2}},\chi(p)\alpha_p^{-1}p^{-\frac{k-2}{2}}\}$. Thus the corresponding (standard) Satake parameters are $\{\chi(p)p^{k-2},\chi(p)\alpha_p^2, 1, \chi(p)\alpha_p^{-2}, \chi(p)p^{-(k-2)}\}$. Twisting by $\chi$, scaling suitably and summing gives $a_p^2 - \chi(p)p^{j+k-1} + p^{j+2k-3} + p^{j+1}$. Doing the same with the left-hand side of the congruence (i.e., the (spinor) Satake parameters of $F$ at $p$) gives $b_{1,p^2} + p^{j+2k-5}$. Comparing modulo $\mathfrak{q}'$ reveals the congruence in Conjecture \ref{cong}.

It makes sense that our congruence is the ``standard" version of the original ``spinor" one, since by Theorem \ref{RallisThm} we usually see standard $L$-polynomials of Siegel modular forms appearing in $L_p(\phi,T)$ (as opposed to spinor $L$-polynomials). However, we expect that any $F$ satisfying Conjecture \ref{RallisThm} should also satisfy the original congruence for $T_p$ eigenvalues.

We end by noting that we were also able to observe congruences between eigenforms of higher depths. For example, consider the genus of rank $8$ lattices of genus $D = 269$ mentioned at the end of Section \ref{sec:rank-6-8}. The depth $3$ eigenform corresponds to a non-lift Siegel eigenform $F$ of genus $3$ and there is a depth $2$ eigenform that corresponds to a non-lift Siegel eigenform $F'$ of genus $2$. The two orthogonal eigenforms have eigenvalues that are (provably) congruent mod $\mathfrak{q}\mid 347$. This implies an Eisenstein congruence involving $F$ and $F'$. However, it is not clear what the true explanation of this congruence is in terms of eigenvalues of $F$ and $F'$. We suspect it to be related to a genus $3$ congruence of Mizumoto-Kurokawa type, similar to those discussed in Section $9$ of \cite{db} (but with non-trivial character). Unfortunately, verifying this would require computation of special values of $L(F', s,\std)$. This is far beyond the scope of existing \textsf{Magma} packages.

\begin{bibdiv}
\begin{biblist}

\bib{agrs}{article}{
    AUTHOR = {Aizenbud, Avraham},
    author = {Gourevitch, Dmitry},
    author = {Rallis, Stephen},
    author = {Schiffmann, G\'{e}rard},
     TITLE = {Multiplicity one theorems},
   JOURNAL = {Ann. of Math. (2)},
    VOLUME = {172},
      YEAR = {2010},
    NUMBER = {2},
     PAGES = {1407--1434},
      ISSN = {0003-486X},
}

\bib{Asai}{article}{
   author={Asai, Tetsuya},
   title={On certain Dirichlet series associated with Hilbert modular forms
   and Rankin's method},
   journal={Math. Ann.},
   volume={226},
   date={1977},
   number={1},
   pages={81--94},
   issn={0025-5831},
   review={\MR{429751}},
   doi={10.1007/BF01391220},
}

\bib{future2}{article}{
author={Assaf, Eran},
author={Fretwell, Dan},
author={Ingalls, Colin},
author={Logan, Adam},
author={Secord, Spencer},
author={Voight, John},
title={Orthogonal modular forms attached to quaternary lattices},
note={in preparation}
}

\bib{amf}{webpage}{
author={Assaf, Eran},
author={Greenberg, Matthew},
author={Hein, Jeffery},
author={Voight, John},
title={Algebraic modular forms},
url={https://github.com/assaferan/ModFrmAlg},
date={2022}
}

\bib{db}{article}{
  author={Bergstr\"om, Jonas},
  author={Dummigan, Neil},
  title={Eisenstein congruences for split reductive groups},
  year={2016},
  journal={Selecta Mathematica},
  volume={22},
  pages={1073--1115}
}

\bib{birch}{article}{
AUTHOR = {Birch, B. J.},
 TITLE = {Hecke actions on classes of ternary quadratic forms},
 BOOKTITLE = {Computational number theory ({D}ebrecen, 1989)},
 PAGES = {191--212},
 PUBLISHER = {de Gruyter, Berlin},
  YEAR = {1991},
}

\bib{Bocherer}{article}{
author={B\"ocherer, Siegfried},
   title={On Yoshida's Theta Lift},
   journal={Cohomology of Arithmetic Groups and Automorphic Forms, Lecture Notes in Mathematics, Springer},
   volume={1447},
   date={1989},
   pages={77--84},
}

\bib{BSP}{article}{
   author={B\"{o}cherer, Siegfried},
   author={Schulze-Pillot, Rainer},
   title={Siegel modular forms and theta series attached to quaternion
   algebras},
   journal={Nagoya Math. J.},
   volume={121},
   date={1991},
   pages={35--96},
   issn={0027-7630},
   review={\MR{1096467}},
   doi={10.1017/S0027763000003391},
}

\bib{cl}{book}{
AUTHOR = {Chenevier, Ga\"{e}tan},
author = {Lannes, Jean},
 TITLE = {Automorphic forms and even unimodular lattices},
SERIES = {Ergebnisse der Mathematik und ihrer Grenzgebiete. 3. Folge.
  [Results in
  Mathematics and Related Areas. 3rd Series. A Series of Modern
  Surveys in Mathematics]},
VOLUME = {69},
  NOTE = {
  Translated from the French by Reinie Ern\'{e}},
 PUBLISHER = {Springer, Cham},
  YEAR = {2019},
 PAGES = {xxi+417},
  ISBN = {978-3-319-95890-3; 978-3-319-95891-0},
   DOI = {10.1007/978-3-319-95891-0},
   URL = {https://doi.org/10.1007/978-3-319-95891-0},
}
  
\bib{Magma}{article}{
   author={Bosma, Wieb},
   author={Cannon, John},
   author={Playoust, Catherine},
   title={The Magma algebra system. I. The user language},
   note={Computational algebra and number theory (London, 1993)},
   journal={J. Symbolic Comput.},
   volume={24},
   date={1997},
   number={3-4},
   pages={235--265},
   issn={0747-7171},
   review={\MR{1484478}},
   doi={10.1006/jsco.1996.0125},
}

\bib{CMT}{article}{
   author={Cohen, Arjeh M.},
   author={Murray, Scott H.},
   author={Taylor, D. E.},
   title={Computing in groups of Lie type},
   journal={Math. Comp.},
   volume={73},
   date={2004},
   number={247},
   pages={1477--1498},
   issn={0025-5718},
   review={\MR{2047097}},
   doi={10.1090/S0025-5718-03-01582-5},
}

\bib{cs}{book}{
  author={Conway, John H.},
  author={Sloane, N.J.A.},
  TITLE={Sphere Packings, Lattices and Groups},
  PUBLISHER={Springer-Verlag},
  SERIES={Grundlehren der Mathematischen Wissenschaften 290},
  EDITION={3rd edition},
  ADDRESS={New York},
  YEAR={1999}
}

\bib{DN}{article}{
   author={Doi, Koji},
   author={Naganuma, Hidehisa},
   title={On the functional equation of certain Dirichlet series},
   journal={Invent. Math.},
   volume={9},
   date={1969/70},
   pages={1--14},
   issn={0020-9910},
   review={\MR{253990}},
   doi={10.1007/BF01389886},
}

\bib{df}{article}{
  author={Dummigan, Neil},
  author={Fretwell, Dan},
  title={Automorphic forms for some even unimodular lattices},
  year={2021},
  journal={Abh. Math. Sem. Univ. Hamburg},
  volume={91},
  pages={19--67}
}

\bib{df2}{article}{
  author={Dummigan, Neil},
  author={Fretwell, Dan},
  title={Ramanujan-style congruences of local origin},
  year={2014},
  journal={Journal of Number Theory},
  volume={143},
  pages={248--261}
}

\bib{dprt}{article}{
author = {Dummigan, Neil},
author={Pacetti, Ariel},
author={Rama, Gustavo},
author={Tornar{\'\i}a, Gonzalo},
eprint={https://arxiv.org/abs/2112.03797},
title={Quinary forms and paramodular forms},
year = {2021},
}

\bib{f}{book}{
  author={Freitag, Eberhard},
  title={Singular modular forms and their relations},
  publisher={Springer-Verlag},
  series={Lecture Notes in Mathematics 1487},
  year={1991},
  doi={https://doi.org/10.1007/BFb0093829}
}

\bib{Fretwell}{article}{
   author={Fretwell, Dan},
   title={Generic level $p$ Eisenstein congruences for $\text{GSp}_4$},
   year={2017},
   journal={Journal of Number Theory},
   volume={180},
   pages={673-693},
}

\bib{FultonHarris}{book}{
   author={Fulton, William},
   author={Harris, Joe},
   title={Representation theory: a first course},
   series={Graduate Texts in Mathematics},
   volume={129},
   note={Readings in Mathematics},
   publisher={Springer-Verlag, New York},
   date={1991},
   pages={xvi+551},
   isbn={0-387-97527-6},
   isbn={0-387-97495-4},
   review={\MR{1153249}},
   doi={10.1007/978-1-4612-0979-9},
}

\bib{deGraaf}{article}{
   author={de Graaf, W. A.},
   title={Constructing representations of split semisimple Lie algebras},
   note={Effective methods in algebraic geometry (Bath, 2000)},
   journal={J. Pure Appl. Algebra},
   volume={164},
   date={2001},
   number={1-2},
   pages={87--107},
   issn={0022-4049},
   review={\MR{1854331}},
   doi={10.1016/S0022-4049(00)00150-X},
}

\bib{Gross}{article}{
   author={Gross, Benedict H.},
   title={Algebraic modular forms},
   journal={Israel J. Math.},
   volume={113},
   date={1999},
   pages={61--93},
   issn={0021-2172},
}

\bib{gv}{incollection}{
AUTHOR = {Greenberg, Matthew},
AUTHOR = {Voight, John},
 TITLE = {Lattice methods for algebraic modular forms on classical
  groups},
 BOOKTITLE = {Computations with modular forms},
SERIES = {Contrib. Math. Comput. Sci.},
VOLUME = {6},
 PAGES = {147--179},
 PUBLISHER = {Springer, Cham},
  YEAR = {2014},
   DOI = {10.1007/978-3-319-03847-6\_6},
   URL = {https://doi.org/10.1007/978-3-319-03847-6_6},
}

\bib{Harder}{incollection}{
AUTHOR = {Harder, G\"unter}, 
TITLE = {A congruence between a Siegel and an elliptic modular form},
BOOKTITLE = {The 1-2-3 of modular forms},
SERIES = {Universitext},
NOTE = {Lectures from the Summer School on Modular Forms and their
              Applications held in Nordfjordeid, June 2004,
              Edited by Kristian Ranestad},
 PUBLISHER = {Springer-Verlag, Berlin},
      YEAR = {2008},
     PAGES = {247--260},
      ISBN = {978-3-540-74117-6},
       DOI = {10.1007/978-3-540-74119-0},
       URL = {https://doi.org/10.1007/978-3-540-74119-0},
}

\bib{CanonicalForm}{article}{
   author={Sikiri\'{c}, Mathieu Dutour},
   author={Haensch, Anna},
   author={Voight, John},
   author={van Woerden, Wessel P. J.},
   title={A canonical form for positive definite matrices},
   conference={
  title={ANTS XIV---Proceedings of the Fourteenth Algorithmic Number
  Theory Symposium},
   },
   book={
  series={Open Book Ser.},
  volume={4},
  publisher={Math. Sci. Publ., Berkeley, CA},
   },
   date={2020},
   pages={179--195},
   review={\MR{4235113}},
   doi={10.2140/obs.2020.4.179},
}

\bib{Freitag}{article}{
    author={Freitag, E.},
   title={Die Wirkung von Heckeoperatoren auf Thetareihen mit harmonischen
   Koeffizienten},
   language={German},
   journal={Math. Ann.},
   volume={258},
   date={1981/82},
   number={4},
   pages={419--440},
   issn={0025-5831},
   review={\MR{650947}},
   doi={10.1007/BF01453976},
}

\bib{Hein}{article}{
   author={Hein, Jeffery},
   year={2016},
   title={Orthogonal modular forms: an application to a conjecture of Birch}, 
   eprint={https://collections.dartmouth.edu/archive/object/dcdis/dcdis-hein2016},
}

\bib{HeinTornariaVoight}{unpublished}{
   author={Hein, Jeffery},
   author={Tornar{\'\i}a, Gonzalo},
   author={Voight, John},
   year={2022},
   title={Computing Hilbert modular forms as orthogonal modular forms}
}

\bib{Ibu19}{article}{
AUTHOR = {Ibukiyama, Tomoyoshi},
 TITLE = {Quinary lattices and binary quaternion hermitian lattices},
   JOURNAL = {Tohoku Math. J. (2)},
VOLUME = {71},
  YEAR = {2019},
NUMBER = {2},
 PAGES = {207--220},
  ISSN = {0040-8735},
   DOI = {10.2748/tmj/1561082596},
   URL = {https://doi.org/10.2748/tmj/1561082596},
}

\bib{Kneser}{article}{
   author={Kneser, Martin},
   title={Klassenzahlen definiter quadratischer Formen},
   language={German},
   journal={Arch. Math.},
   volume={8},
   date={1957},
   pages={241--250},
   issn={0003-889X},
   review={\MR{90606}},
   doi={10.1007/BF01898782},
}

\bib{Kurokawa}{article}{
   author={Kurokawa, Nobushige},
   title={Congruences between Siegel modular forms of degree 2},
   journal={Proc. Japan Acad.},
   volume={55A},
   date={1979},
   pages={417--422},
}

\bib{lmfdb}{misc}{
  author       = {The {LMFDB Collaboration}},
  title        = {The {L}-functions and modular forms database},
  year         = {2022},
  note         = {[Online; accessed 3 March 2022]},
}

\bib{vanLeeuwen}{article}{
   author={van Leeuwen, M. A. A.},
   title={LiE, a software package for Lie group computations},
   journal={Euromath Bull.},
   volume={1},
   date={1994},
   number={2},
   pages={83--94},
   issn={1359-4346},
   review={\MR{1283465}},
}

\bib{Megarbane}{article}{
   author={M\'egarban\'e, Thomas},
   title={Calcul des op\'erateurs de Hecke sur les classes d'isomorphisme de r\'eseaux pairs de d\'eterminant 2 en dimension 23 et 25},
   year={2018},
   journal={Journal of Number Theory},
   volume={186},
   pages={370--416}
}

\bib{Mizumoto}{article}{
   author={Mizumoto, Shin-ichiro},
   title={Congruences for eigenvalues of Hecke operators on Siegel modular forms of
degree two},
   year={1986},
   journal={Math. Ann.},
   volume={275},
   pages={149--161}
}

\bib{Murphy}{article}{
   author={Murphy, Daniel K.},
   title={Algebraic modular forms on definite orthogonal groups},
   eprint={https://stacks.stanford.edu/file/druid:pv404zw1184/thesis.pdftk-augmented.pdf},
   year={2013}
}

\bib{NS}{article}{
   author={Nguyen, Phong Q.},
   author={Stehl\'{e}, Damien},
   title={Low-dimensional lattice basis reduction revisited},
   journal={ACM Trans. Algorithms},
   volume={5},
   date={2009},
   number={4},
   pages={Art. 46, 48},
   issn={1549-6325},
   review={\MR{2571909}},
   doi={10.1145/1597036.1597050},
}

\bib{Pitale}{article}{
   author={Pitale, Ameya},
   title={Siegel Modular Forms: A Classical and Representation-Theoretic Approach},
   journal={Lecture Notes in Mathematics (Springer)},
   date={2019},
   number={2240},
}

\bib{PS}{article}{
   author={Plesken, W.},
   author={Souvignier, B.},
   title={Computing isometries of lattices},
   note={Computational algebra and number theory (London, 1993)},
   journal={J. Symbolic Comput.},
   volume={24},
   date={1997},
   number={3-4},
   pages={327--334},
   issn={0747-7171},
   review={\MR{1484483}},
   doi={10.1006/jsco.1996.0130},
}

\bib{Ponomarev}{article}{
   author={Ponomarev, Paul},
   title={Arithmetic of quaternary quadratic forms},
   journal={Acta Arith.},
   volume={29},
   date={1976},
   number={1},
   pages={1--48},
   issn={0065-1036},
   review={\MR{414517}},
   doi={10.4064/aa-29-1-1-48},
}

\bib{Rallis}{article}{
   author={Rallis, Stephen},
   title={Langlands' Functoriality and the Weil Representation},
   journal={American Journal of Mathematics},
   volume={104},
   number={3},
   date={1982},
   pages={469--515},
}

\bib{RamaTornaria}{inproceedings}{
AUTHOR = {Rama, Gustavo},
author={Tornar\'{\i}a, Gonzalo},
 TITLE = {Computation of paramodular forms},
 BOOKTITLE = {A{NTS} {XIV}---{P}roceedings of the {F}ourteenth {A}lgorithmic
  {N}umber {T}heory {S}ymposium},
SERIES = {Open Book Ser.},
VOLUME = {4},
 PAGES = {353--370},
 PUBLISHER = {Math. Sci. Publ., Berkeley, CA},
  YEAR = {2020},
}

\bib{ribet}{article}{
   author={Ribet, K.A.},
   title={A modular construction of unramified $p$-extensions of $\Q(\mu_p)$},
   journal={Invent. Math.},
   date={1976},
   number={34},
   pages={151--162},
   }
   
\bib{Siegel}{book}{
   author={Siegel, Carl Ludwig},
   title={Lectures on the Geometry of Numbers},
   publisher={Springer},
   date={1989},
}  
   
\bib{Taibi}{article}{
   author={Ta\"ibi, Olivier},
   title={ Arthur’s multiplicity formula for certain inner forms of special orthogonal and symplectic groups},
   journal={J. Eur. Math. Soc. (JEMS)},
   date={2019},
   number={21},
   pages={839--871},
}

\bib{Yoshida}{article}{
   author={Yoshida, Hiroyuki},
   title={Siegel's modular forms and the arithmetic of quadratic forms},
   journal={Invent. Math.},
   volume={60},
   date={1980},
   number={3},
   pages={193--248},
   issn={0020-9910},
   review={\MR{586427}},
   doi={10.1007/BF01390016},
}

\end{biblist}
\end{bibdiv}

\end{document}